\documentclass[12pt]{article}
\usepackage{amsfonts,amssymb}

\newtheorem{thm}{Theorem}[section]
\newtheorem{prop}[thm]{Proposition}
\newtheorem{lemma}[thm]{Lemma}
\newtheorem{cor}[thm]{Corollary}

\newenvironment{proof}{\prepf\rm}{\endprepf}
\newcommand{\qed}{\qquad$\Box$}

\newcommand{\Inn}{\mathop{\mathrm{Inn}}}

\newcommand{\Aut}{\mathop{\mathrm{Aut}}}
\newcommand{\Sym}{\mathop{\mathrm{Sym}}}

\begin{document}

\title{Pre-primitive permutation groups}
\author{Marina Anagnostopoulou-Merkouri\footnote{School of Mathematics, University of Bristol, BS8 1UG, UK; 
\texttt{marina.anagnostopoulou-merkouri@bristol.ac.uk}},\,
Peter J. Cameron\footnote{School of Mathematics and Statistics, University of St Andrews, Fife, KY16 9SS, UK; \texttt{pjc20@st-andrews.ac.uk}}\\
and Enoch Suleiman\footnote{Department of Mathematics, Federal University Gashua, Yobe State, Nigeria;
\texttt{enochsuleiman@gmail.com}}}
\date{}
\maketitle

\begin{abstract}
A transitive permutation group $G$ on a finite set $\Omega$ is said to be
\emph{pre-primitive} if every $G$-invariant partition of $\Omega$ is the
orbit partition of a subgroup of $G$. It follows that pre-primitivity and
quasiprimitivity are logically independent (there are groups satisfying
one but not the other) and their conjunction is equivalent to primitivity.
Indeed, part of the motivation for studying pre-primitivity is to investigate
the gap between primitivity and quasiprimitivity. We investigate the
pre-primitivity of various classes of transitive groups including groups with
regular normal subgroups, direct and wreath products, and diagonal groups.
In the course of this investigation, we describe all $G$-invariant partitions
for various classes of permutation groups $G$.
We also look briefly at conditions similarly related to other pairs of
conditions, including transitivity and quasiprimitivity, $k$-homogeneity and
$k$-transitivity, and primitivity and synchronization.

\noindent\textbf{Keywords: }transitive permutation group, invariant partition, quasiprimitivity
\end{abstract}

\section{Introduction}

In his pioneering work on permutation groups in his Second Memoir~\cite{galois},
Galois introduced the notion of primitivity, which has occupied
the attention of mathematicians ever since. However, Neumann~\cite{pmn:galois}
has pointed out that Galois confused two inequivalent conditions for the
transitive permutation group~$G$ on $\Omega$:
\begin{itemize}
\item $G$ preserves no non-trivial partition of $\Omega$ (the trivial
partitions being the partition into singletons and the partition with a
single part);
\item every non-trivial normal subgroup of $G$ is transitive.
\end{itemize}
The first of these conditions is what is now called \emph{primitivity},
while the second is \emph{quasiprimitivity}. Since the orbit partition of
a normal subgroup is $G$-invariant, we see that a primitive group is
quasiprimitive; but the converse is false, as we may see by considering the
regular representation of a non-abelian simple group.

In order to investigate the gap between these two properties, we make the
following definition: The transitive permutation group $G$ on $\Omega$ is
\emph{pre-primitive} if every $G$-invariant partition is the orbit partition
of a subgroup of $G$. We can assume that this subgroup is normal:

\begin{prop}
If a $G$-invariant partition is the orbit partition of a subgroup of $G$, then
it is the orbit partition of a normal subgroup.
\label{p:normal}
\end{prop}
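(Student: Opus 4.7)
The plan is to produce the normal subgroup by taking the kernel of the natural action of $G$ on the set of parts of the partition.

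More precisely, let $\mathcal{P}$ be a $G$-invariant partition of $\Omega$ which is the orbit partition of some subgroup $H\le G$. Since $\mathcal{P}$ is $G$-invariant, $G$ permutes the parts of $\mathcal{P}$, giving a homomorphism from $G$ to $\Sym(\mathcal{P})$. Let $N$ be its kernel, so $N\trianglelefteq G$ and $N$ consists of exactly those elements of $G$ that preserve every part of $\mathcal{P}$ setwise. I would then show that the orbits of $N$ are precisely the parts of $\mathcal{P}$.

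The inclusion of orbits inside parts is immediate from the definition of $N$: any $N$-orbit is contained in a single part. For the reverse inclusion, observe that every element of $H$ fixes every $H$-orbit setwise, and by hypothesis the $H$-orbits are exactly the parts of $\mathcal{P}$; hence $H\le N$. Therefore each $N$-orbit contains an $H$-orbit, i.e.\ contains a whole part of $\mathcal{P}$. Combining the two inclusions shows the $N$-orbits coincide with the parts of $\mathcal{P}$, as required.

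There is no real obstacle here; the only point to emphasise is the definition of $N$ as the kernel of the action on $\mathcal{P}$, after which normality and the two containments follow at once. This argument simultaneously shows that among all subgroups of $G$ whose orbit partition equals $\mathcal{P}$, there is a (unique) largest one, and it is automatically normal in $G$.
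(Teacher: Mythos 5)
Your proof is correct and is essentially the paper's own argument: the paper also takes $N$ to be the set of all elements of $G$ fixing every part of the partition setwise (the kernel of the action on the parts), with the containments $H\le N$ and ``$N$-orbits lie inside parts'' left implicit. Your write-up just makes those two inclusions explicit.
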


\begin{proof}
The set of permutations fixing all parts of a $G$-invariant partition is a
normal subgroup of $G$. \qed
\end{proof}

\begin{thm}
\begin{enumerate}
\item There are permutation groups which are quasiprimitive but not 
pre-primitive, and permutation groups which are pre-primitive but not
quasiprimitive.
\item A permutation group is primitive if and only if it is quasiprimitive
and pre-primitive.
\end{enumerate}
\end{thm}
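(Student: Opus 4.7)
I will do part~(b) first, since it follows almost immediately from Proposition~\ref{p:normal}, and then construct explicit small examples for part~(a).

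For part~(b), the forward direction is standard: a primitive group has no non-trivial $G$-invariant partitions, so pre-primitivity is vacuous, and the orbit partition of any non-trivial normal subgroup is $G$-invariant and therefore trivial, which combined with faithfulness forces the subgroup to be transitive. For the converse, given a $G$-invariant partition $P$, pre-primitivity together with Proposition~\ref{p:normal} lets me write $P$ as the orbit partition of a normal subgroup $N$; quasiprimitivity then forces $N = 1$ or $N$ transitive, so $P$ is one of the two trivial partitions, whence $G$ is primitive.

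For part~(a), I will use two regular representations. For \emph{quasiprimitive but not pre-primitive}, I take $G = A_5$ acting regularly on itself. Since $A_5$ is simple, the only non-trivial normal subgroup is $G$ itself, which is transitive, so the action is quasiprimitive. On the other hand, in any regular action the blocks containing the identity are exactly the subgroups of $G$, so the $G$-invariant partitions are precisely the coset partitions $G/H$ for $H \le G$; and the coset partition $G/H$ is the orbit partition of a subgroup $K \trianglelefteq G$ if and only if $H = K$. Thus picking any non-normal $H \le A_5$ (for instance a Sylow $2$-subgroup) produces a $G$-invariant partition which is not the orbit partition of any subgroup, so the action is not pre-primitive.

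For \emph{pre-primitive but not quasiprimitive}, I take $G = \mathbb{Z}_4$ acting regularly on itself. By the same description of invariant partitions in a regular action, every invariant partition is a coset partition $G/H$; and since $G$ is abelian, every such $H$ is normal, so every invariant partition is the orbit partition of a normal subgroup. Hence the action is pre-primitive. But the unique subgroup of order $2$ is a non-trivial normal subgroup with two orbits of size $2$, so it is intransitive, violating quasiprimitivity. The main thing to be careful about is the standard lemma that in a regular action the invariant partitions are exactly the coset partitions of subgroups; I will state it briefly and use it in both examples.
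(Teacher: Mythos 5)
Your proposal is correct and follows essentially the same route as the paper: part~(b) via Proposition~\ref{p:normal} and quasiprimitivity, and part~(a) via the regular representation of a non-abelian simple group and of an abelian group of non-prime order. The only cosmetic difference is that the paper deduces ``not pre-primitive'' and ``not quasiprimitive'' for these examples from part~(b), whereas you verify them directly from the coset-partition description of invariant partitions in a regular action; both are fine.
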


\begin{proof} We establish the second statement first. We have noted that a
primitive group is quasiprimitive; it is also pre-primitive, since both trivial
partitions are orbit partitions of subgroups (the trivial group and the whole
group respectively). Conversely, suppose that $G$ is pre-primitive and
quasiprimitive, and let $\Pi$ be a $G$-invariant partition. Then
$\Pi$ is the orbit partition of a subgroup $H$ of $G$. As noted after
the definition, we may assume that $H$ is normal in $G$; now quasiprimitivity
shows that $H$ is trivial or transitive, so $\Pi$ is trivial.

We have seen examples of quasiprimitive groups which are not primitive,
and hence not pre-primitive. For the other case, let $G$ be an abelian
group which is not of prime order, acting regularly. Then the $G$-invariant
partitions are the coset partitions of subgroups of $G$, and so $G$ is
pre-primitive but not primitive, hence not quasiprimitive. \qed
\end{proof}

We give here another general property of pre-primitivity, which it shares with
many permutation group properties.

\begin{thm} 
Pre-primitivity is upward-closed; that is, if $G_1$ and $G_2$ are transitive
permutation groups on $\Omega$ with $G_1$ pre-primitive and $G_1\le G_2$,
then $G_2$ is pre-primitive.
\label{lemma-PP}
\end{thm}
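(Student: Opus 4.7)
My plan is to pass from a $G_2$-invariant partition $\Pi$ to a subgroup of $G_2$ whose orbit partition is $\Pi$, using the pre-primitivity of $G_1$ together with the natural kernel construction from Proposition~\ref{p:normal}.

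Concretely, let $\Pi$ be a $G_2$-invariant partition of $\Omega$. Since $G_1\le G_2$, the partition $\Pi$ is also $G_1$-invariant. Pre-primitivity of $G_1$ (together with Proposition~\ref{p:normal}) gives a subgroup $H\le G_1$, normal in $G_1$, whose orbit partition on $\Omega$ is exactly $\Pi$. The point I would exploit next is that $H$ fixes every part of $\Pi$ setwise.

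Now let $K$ be the subgroup of $G_2$ consisting of all elements fixing every part of $\Pi$ setwise; by the observation in the proof of Proposition~\ref{p:normal}, $K$ is normal in $G_2$. I would then squeeze the orbits of $K$ between two partitions: on the one hand each orbit of $K$ lies inside a single part of $\Pi$, since $K$ preserves parts; on the other hand $H\le K$, so every orbit of $K$ contains a full orbit of $H$, i.e.\ a full part of $\Pi$. The two bounds force each $K$-orbit to coincide with a part of $\Pi$, so $\Pi$ is the orbit partition of the subgroup $K$ of $G_2$. Since $\Pi$ was arbitrary, $G_2$ is pre-primitive.

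There is essentially no hard step here; the only subtlety is that the subgroup $H$ supplied by pre-primitivity of $G_1$ need not itself be normal in $G_2$, which is why I replace it by the larger, manifestly $G_2$-normal kernel $K$. The sandwich argument above is what makes this replacement harmless.
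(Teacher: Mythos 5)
Your argument is correct and its core is the same as the paper's: a $G_2$-invariant partition $\Pi$ is $G_1$-invariant, so pre-primitivity of $G_1$ supplies a subgroup $H\le G_1\le G_2$ whose orbit partition is $\Pi$, and that already finishes the proof, since the definition of pre-primitivity asks only for a subgroup, not a normal one. Your extra step of replacing $H$ by the kernel $K$ of the $G_2$-action on the parts (with the sandwich argument) is correct but unnecessary here --- it is essentially a re-proof of Proposition~\ref{p:normal} for $G_2$, which one would invoke only if normality in $G_2$ were actually required.
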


\begin{proof}
With these hypotheses, let $\Pi$ be a $G_2$-invariant partition. Then
clearly $\Pi$ is $G_1$-invariant, so it is the orbit partition of a
subgroup $H$ of $G_1$; and we have $H\le G_2$. \qed
\end{proof}

We also give a group-theoretical characterisation of pre-primitivity.

\begin{thm}
Let $G$ be transitive on $\Omega$, and take $\alpha\in\Omega$. Then $G$ is
pre-primitive if and only if every subgroup $H$ containing $G_\alpha$ has
the form $H=NG_\alpha$ for some normal subgroup $N$ of $G$.
\end{thm}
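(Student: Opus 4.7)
The plan is to exploit the classical bijection between subgroups $H$ with $G_\alpha \le H \le G$ and $G$-invariant partitions of $\Omega$: the subgroup $H$ is sent to the partition $\Pi_H$ whose parts are the sets $\alpha Hg$ for $g \in G$, and a partition $\Pi$ is sent to the setwise stabiliser in $G$ of the part containing $\alpha$. Under this correspondence, the content of pre-primitivity (every $G$-invariant partition is the orbit partition of some, and hence by Proposition~\ref{p:normal} of some \emph{normal}, subgroup) should translate directly into the desired condition on overgroups of $G_\alpha$. So the first thing I would do is state this correspondence carefully and verify its inverse property, since everything that follows is bookkeeping along this dictionary.

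The second step is to identify which subgroup $H$ corresponds to the orbit partition of a normal subgroup $N \trianglelefteq G$. The part of that orbit partition containing $\alpha$ is $\alpha N$, and its setwise stabiliser in $G$ consists of those $g$ with $\alpha N g = \alpha N$. Since $N$ is normal we have $Ng = gN$, so this condition becomes $\alpha g \in \alpha N$, i.e.\ $g \in G_\alpha N = NG_\alpha$. Thus the subgroup associated to the orbit partition of a normal subgroup $N$ is exactly $NG_\alpha$.

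With the dictionary in place, the biconditional falls out. For the forward direction, assume $G$ is pre-primitive, and let $G_\alpha \le H \le G$. Then the associated partition $\Pi_H$ is $G$-invariant, hence by pre-primitivity and Proposition~\ref{p:normal} it is the orbit partition of some normal subgroup $N$ of $G$; by step two, the subgroup assigned to this partition is $NG_\alpha$, and since the correspondence is a bijection, $H = NG_\alpha$. Conversely, assume the overgroup condition, and let $\Pi$ be an arbitrary $G$-invariant partition with associated overgroup $H$. By hypothesis $H = NG_\alpha$ for some normal $N \trianglelefteq G$, and then by step two $\Pi = \Pi_H$ is the orbit partition of $N$, so $G$ is pre-primitive.

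The only real obstacle is step two — making sure the normality of $N$ is used correctly when unpacking the stabiliser of $\alpha N$ as $NG_\alpha$ (rather than $G_\alpha N$, with the right side), and that no hidden assumption about transitivity of $N$ is needed. Everything else is routine once the standard overgroup--partition correspondence for transitive actions is set up.
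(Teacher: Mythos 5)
Your proposal is correct and follows essentially the same route as the paper: both proofs rest on the standard bijection between overgroups of $G_\alpha$ and $G$-invariant partitions, together with the observation that the orbit partition of a normal subgroup $N$ corresponds under this bijection to the overgroup $NG_\alpha$. Your write-up merely makes the bijection and its inverse more explicit than the paper does; no new ideas or gaps are involved.
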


\begin{proof}
We observe that every subgroup $H$ containing $G_\alpha$ is the stabiliser of
the part containing $\alpha$ of some $G$-invariant partition $\Pi$. If $G$
is pre-primitive, then there is a normal subgroup $N$ of $G$ whose orbits
are the parts of $\Pi$; then the $NG_\alpha$-orbit of $\alpha$ is the part
of $\Pi$ containing $\alpha$, and so $NG_\alpha=H$. Conversely, if
$NG_\alpha=H$ for some normal subgroup $N$ of $G$, then the $N$-orbit of
$\alpha$ is equal to the $H$-orbit, and so is a part of $\Pi$; since $N$ is
normal, every $N$-orbit is a part of $\Pi$. Thus $G$ is pre-primitive. \qed 
\end{proof}

In the remainder of the paper, we consider various classes of transitive
groups, including groups with regular normal subgroups, direct and wreath
products of transitive groups, and diagonal groups~\cite{bcps}. We attempt to
determine when these groups are pre-primitive; in some cases we succeed, in
others we obtain necessary and sufficient conditions which are quite close
together. We report the result of computations on the numbers of transitive
groups of small degree which are pre-primitive, quasiprimitive and primitive
respectively, showing that the first two conditions are approximately
statistically independent. In several cases we determine all the $G$-invariant
partitions for certain types of permutation group.

In the last section we consider similar conditions
relating to other pairs of permutation group properties, the second being
stronger than the first:
\begin{itemize}
\item transitivity and quasiprimitivity;
\item $k$-homogeneity and $k$-transitivity;
\item primitivity and synchronization~\cite{acs,pmn:sync}.
\end{itemize}
These all turn out to be of less interest. The first case is trivial. In the
second, the property we are looking for is Neumann's notion of
\emph{generous $(k-1)$-transitivity}~\cite{pmn:generosity}, which has been
much studied, especially for $k=2$. In the third case, we define an
appropriate property which we call \emph{pre-synchronization}, but we
prove that the only transitive group which is pre-synchronizing but not
synchronizing is the Klein group of order~$4$.

\section{Pre-primitive groups of specific types}

In this major section we discuss some familiar types of transitive permutation
groups with a view to deciding when they are pre-primitive.

\subsection{Groups with regular normal subgroups}
\label{s:rns}

To help fix the ideas, we first discuss groups acting regularly. If $G$ acts
regularly on $\Omega$, then the set $\Omega$ is bijective with $G$ and the
given action is isomorphic to the action by right multiplication. If the point
$\alpha\in\Omega$ corresponds to the identity of $G$, then:
\begin{enumerate}
\item a partition of $G$ is $G$-invariant if and only if it is the
right coset partition of a subgroup of $G$;
\item a partition of $G$ is the orbit partition of a subgroup $H$ of $G$ if and
only if it is the left coset partition of $H$.
\end{enumerate}

For (a), suppose that $H$ is the part of the partition containing the
identity. Then for $h_1,h_2\in H$, multiplication by $h_1^{-1}h_2$ maps
$h_1$ to $h_2$, so fixes $H$; thus $1.(h_1^{-1}h_2)\in H$. So $H$ is a
subgroup of $G$. Then, for any $x\in G$, $Hx$ is a part of the partition.
So the claim is proved.

For (b), let $H$ be a subgroup of $G$. Then the orbit of $H$ containing $g$
is the left coset $gH$. So the claim holds.

It follows that the regular group $G$ is pre-primitive if and only if, for
every subgroup $H$, the left and right coset partitions of $H$ coincide, that
is, $H$ is a normal subgroup. We will state this formally as a corollary of
the main result of this section.

\begin{thm}
Let $G$ be a permutation group on $\Omega$ with a regular normal subgroup
$N$. Then $G$ is pre-primitive if and only if every $G_\alpha$-invariant
subgroup $H$ of $N$ is normal in $N$.
\label{t:rns}
\end{thm}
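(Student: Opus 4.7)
The plan is to identify $\Omega$ with $N$ by sending $\alpha$ to the identity; under this identification $N$ acts on itself by right multiplication, and because $N$ is regular and normal we have $G = N \rtimes G_\alpha$ (since $N$ is transitive and $N \cap G_\alpha = 1$), so every element of $G$ factors uniquely as $nk$ with $n \in N$ and $k \in G_\alpha$. A short computation with the right-action convention shows that each $k \in G_\alpha$ acts on $N$ by the conjugation $n \mapsto k^{-1}nk$. With this setup in place, the remaining tasks are to describe the $G$-invariant partitions, to describe the orbit partitions of normal subgroups of $G$, and to compare the two.

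First I would determine the $G$-invariant partitions. By part~(a) of the regular-case discussion at the start of this subsection, the $N$-invariant partitions of $\Omega = N$ are exactly the right coset partitions of subgroups $H \le N$. Such a partition is also $G_\alpha$-invariant precisely when $G_\alpha$ preserves the part through the identity, that is, when $H$ is stable under conjugation by $G_\alpha$. So the $G$-invariant partitions correspond bijectively to the $G_\alpha$-invariant subgroups $H$ of $N$.

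Next I would compute the orbit partition of an arbitrary normal subgroup $M$ of $G$; by Proposition~\ref{p:normal} it suffices to consider such $M$. Set $\bar M := MG_\alpha \cap N$. The factorisation $G = NG_\alpha$ together with the normality of $M$ (which makes $MG_\alpha$ a subgroup of $G$) should give that $\bar M$ is a $G_\alpha$-invariant subgroup of $N$, and that the $M$-orbit of a point $n \in N$ corresponds to the left coset $n\bar M$. So the orbit partition of $M$ is the left coset partition of $\bar M$ in $N$.

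Finally I would combine these two descriptions. The $G$-invariant partition associated with a $G_\alpha$-invariant $H \le N$ is the orbit partition of some $M \trianglelefteq G$ precisely when the right coset partition of $H$ coincides with the left coset partition of some $\bar M$; looking at the part through the identity forces $\bar M = H$, and then equality of the left and right coset partitions of $H$ is equivalent to $H$ being normal in $N$. For the reverse direction I would note that if $H$ is $G_\alpha$-invariant and normal in $N$, then $H$ is normalised by $NG_\alpha = G$, so $H$ itself serves as the required normal subgroup (and clearly $\bar H = H$). I expect the most delicate step to be the orbit calculation showing that $M$-orbits are precisely left cosets of $\bar M$, as it requires careful bookkeeping through the semidirect-product factorisation.
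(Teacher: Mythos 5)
Your setup and your classification of the $G$-invariant partitions (right coset partitions of $G_\alpha$-invariant subgroups of $N$) are correct and match the paper. The problem is precisely the step you flag as delicate: the orbit calculation is wrong, and the error is load-bearing. Since $M$ is normal in $G$, its orbits form a $G$-invariant partition, so the orbit containing the point $n$ is the image of the orbit containing the identity under right multiplication by $n$. The orbit of the identity is indeed $\bar M=MG_\alpha\cap N$ (your counting $|M:M\cap G_\alpha|=|\bar M|$ goes through), but its image under right multiplication by $n$ is the \emph{right} coset $\bar Mn$, not the left coset $n\bar M$. The left-coset description in part~(b) of the regular-case discussion applies to subgroups of $N$ acting by right multiplication; a normal subgroup $M$ of $G$ not contained in $N$ does not act by right multiplication, and its orbits are right cosets of $\bar M$. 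With the corrected description your final comparison collapses: the right coset partition of $H$ equals the orbit partition of $M$ if and only if $\bar M=H$, and no normality of $H$ in $N$ is extracted. A concrete test case: let $N=D_8$ act on itself by right multiplication, let $s$ be a non-central involution, and let $G=\langle N,\gamma\rangle$ where $\gamma:x\mapsto sxs^{-1}$, so $G_\alpha=\langle\gamma\rangle$. Then $M=\langle\lambda_s\rangle$ (left multiplication by $s$, which equals $\gamma\rho_s\in G$) is normal in $G$, and its orbits $\{x,sx\}$ are the right cosets of the non-normal, $G_\alpha$-invariant subgroup $\langle s\rangle$ --- not its left cosets.

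This is not a repairable bookkeeping slip but a missing idea: the forward direction requires showing that the subgroup $\bar M$ whose right cosets are the $M$-orbits is normal in $N$, and the example above shows this is not automatic for an arbitrary normal $M$, so the implication cannot be established one partition at a time by the comparison you propose. The paper takes a different route at this point: it argues that the normal subgroup $K$ realising the partition satisfies $K\cap N=H$, so that $H$ is an intersection of two normal subgroups of $G$ and hence normal in $N$; that identification of $K\cap N$ with $H$ (rather than merely $KG_\alpha\cap N=H$) is the real content of the forward direction, and it is exactly what your argument, and indeed the $D_8$ example, puts under pressure. I would urge you to run that example through whichever version of the forward direction you settle on before going further.
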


\begin{proof}
Since $N$ is a regular normal subgroup of $G$, we have $G=NG_\alpha$, and 
we can identify $\Omega$ with $N$ in such a way that $N$ acts by right
multiplication and $G_\alpha$ acts by conjugation. Moreover, we can assume
that $\alpha$ is the identity element of $N$ (see~\cite[Theorem~11.2]{wielandt}).

Suppose that $G$ is pre-primitive. Let $H$ be a $G_\alpha$-invariant subgroup
of $N$. The right coset partition $\Pi$ of $H$ is $N$-invariant. It is also
$G_\alpha$-invariant: for if $Hn$ is a right coset of $H$ and $g\in G_\alpha$,
then $(Hn)^g=H^gn^g=Hn'$ for some $n'\in N$. Since $NG_\alpha=G$,
$\Pi$ is $G$-invariant. Since $G$ is pre-primitive, $\Pi$ is the orbit partition
of a normal subgroup $K$ of $G$. Now $K\le G_\alpha(K\cap H)$, so
$\alpha H=\alpha K=\alpha(K\cap N)$. Since $N$ s regular, it follows that
$K\cap N=H$.

Conversely, assume that every $G_\alpha$-invariant subgroup of $N$ is
normal in $N$. Choose a $G$-invariant partition $\Pi$. Since $\Pi$ is
$N$-invariant it is the right coset partition of a subgroup $K$ of $N$, which
is also $G_\alpha$-invariant, since $G_\alpha$ fixes the part of $\Pi$
containing the identity. Thus $K$ is normal in $N$. The normal subgroup of
$G$ fixing every part of $\Pi$ contains $K$, and $\Pi$ is the orbit partition
of $N\cap K$. \qed
\end{proof}

This result allows us to deal with some special types of permutation groups.
First we state and prove our earlier result about regular groups.
Recall that a \emph{Dedekind group} is a finite group in which every
subgroup is normal. Dedekind~\cite{dedekind} showed:

\begin{thm}
A finite group $G$ is a Dedekind group if and only if either $G$ is abelian,
or $G\cong Q\times A\times B$, where $Q$ is the quaternion group of order~$8$,
$A$ is an elementary abelian $2$-group, and $B$ is an abelian group of odd
order.
\label{t:dedekind}
\end{thm}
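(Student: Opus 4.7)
The plan is to prove the two directions separately.

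\emph{First}, I would handle sufficiency. Abelian groups are trivially Dedekind, so the main case is $G = Q \times A \times B$ with $|Q \times A|$ a power of $2$ and $|B|$ odd. A standard coprime-order argument will show every subgroup $H \le G$ factors as $H = (H \cap (Q \times A)) \times (H \cap B)$; the second factor is normal since $B$ is abelian, so it suffices to check every subgroup $K$ of $Q \times A$ is normal. If $K \le Z(Q) \times A$ then $K$ is central; otherwise I would pick $s = (q,a) \in K$ with $q$ non-central in $Q$, observe that $s^2 = (-1,1) \in K$ (since such $q$ squares to $-1$ and $a^2 = 1$), and verify by direct calculation that any conjugate $gsg^{-1}$ lies in $\{s,\ s \cdot (-1,1)\} = \{s, s^3\} \subseteq \langle s \rangle$.

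\emph{Second}, for necessity I would first build a copy of $Q$ inside a non-abelian Dedekind group $G$. For each $a \in G$, normality of $\langle a \rangle$ provides a homomorphism $G \to \Aut(\langle a \rangle)$ sending $g$ to the unique exponent $k$ with $gag^{-1} = a^k$. I would pick non-commuting $a,b \in G$ of smallest possible orders, write $bab^{-1} = a^r$ with $r \not\equiv 1 \pmod{|a|}$ and $aba^{-1} = b^s$, and then apply the same constraint to cyclic subgroups such as $\langle ab\rangle$, $\langle a^{-1}b\rangle$, $\langle a^2 b\rangle$, using the minimality of $|a|$ and $|b|$ to force $|a| = |b| = 4$, $r \equiv s \equiv -1 \pmod 4$, and $a^2 = b^2$, giving $\langle a,b \rangle \cong Q$.

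\emph{Third}, I would split off $Q_0 = \langle a,b\rangle$ as a direct factor. Writing $z = a^2$, each $g \in G$ must act on $Q_0$ as an inner automorphism of $Q_0$, so $G = Q_0 \cdot C_G(Q_0)$ with $Q_0 \cap C_G(Q_0) = \langle z \rangle$. The centraliser $C = C_G(Q_0)$ is again Dedekind, and any non-commuting pair in $C$ would contradict the minimality of $|a|$ and $|b|$, so $C$ is abelian. I would split $C$ into its Sylow $2$-subgroup $P \ni z$ and Hall $2'$-subgroup $B$; the same minimality forces $P$ to have exponent $2$ modulo $\langle z \rangle$, giving $P = \langle z \rangle \times A$ with $A$ elementary abelian, and absorbing the shared central $\langle z\rangle$ into $Q_0$ rewrites $G$ as the internal direct product $Q \times A \times B$. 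The hardest step will be the commutator bookkeeping in the previous paragraph: extracting a copy of $Q$ from nothing beyond the normality of every cyclic subgroup requires juggling conjugation relations for several generators at once and ruling out all larger orders.
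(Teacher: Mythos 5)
First, note that the paper offers no proof of this statement: it is quoted as Dedekind's classical classification, with a reference to \cite{dedekind}, so there is nothing internal to compare against. Your outline is the standard textbook proof of that classification, and the sufficiency half is essentially right; the only presentational point is that the conjugation computation must be applied to every element of $K$, not just one chosen $s$, but since each non-central element $t$ of $K$ satisfies $t^2=(-1,1)$ and has all its conjugates in $\{t,t^3\}\subseteq\langle t\rangle$, $K$ is a union of cyclic subgroups normal in $G$ and hence is itself normal.

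The necessity half has the right architecture, but two of its stated justifications fail. The claim that a non-commuting pair in $C=C_G(Q_0)$ ``would contradict the minimality of $|a|$ and $|b|$'' is not correct: such a pair would again consist of elements of order $4$, exactly like $a$ and $b$, so minimality yields no contradiction. The genuine obstruction is an explicit non-normal cyclic subgroup: if $\langle a_1,b_1\rangle\cong Q$ sits inside $C$, then conjugation by $b$ sends $aa_1$ to $a^{-1}a_1=z\,aa_1$ (since $a_1$ commutes with $b$), and a short case check, according as $a_1^2=z$ or not, shows $z\,aa_1\notin\langle aa_1\rangle$ --- this is exactly the computation behind the paper's own remark that $Q_8\times Q_8$ is not Dedekind. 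Likewise, minimality of the non-commuting pair says nothing about the orders of elements of $C$, which commute with $Q_0$ and may even be central in $G$; what rules out a $2$-element $x\in C$ of order $4$ is again that $\langle ax\rangle$ fails to be normal, because $b(ax)b^{-1}=z\,ax\notin\langle ax\rangle$ whether or not $x^2=z$. In particular your conclusion that $P$ has ``exponent $2$ modulo $\langle z\rangle$'' is too weak: it admits an element $x$ with $x^2=z$, i.e.\ the central product of $Q$ with $C_4$, which the same computation shows is not Dedekind. You need the Sylow $2$-subgroup of $C$ to be elementary abelian outright, and only then does $\langle z\rangle$ split off with an elementary abelian complement $A$. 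With those two repairs, plus the commutator bookkeeping you defer for forcing $|a|=|b|=4$ and $a^2=b^2$, the outline becomes a complete proof.
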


\begin{cor}
The regular action of a finite group $G$ is pre-primitive if and only if $G$
is a Dedekind group.
\label{c:regular}
\end{cor}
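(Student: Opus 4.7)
The plan is to deduce this directly from Theorem~\ref{t:rns} by specialising to the case where $G$ itself plays the role of the regular normal subgroup. In the regular action of $G$ on $\Omega$, we may identify $\Omega$ with $G$ so that $G$ acts by right multiplication; take $\alpha$ to be the identity. Then $N := G$ is a regular normal subgroup of the ambient permutation group (which is $G$ itself), and $G_\alpha$ is the trivial subgroup.

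With this setup, Theorem~\ref{t:rns} asserts that $G$ is pre-primitive if and only if every $G_\alpha$-invariant subgroup of $N$ is normal in $N$. Since $G_\alpha = 1$, the $G_\alpha$-invariance condition is vacuous: every subgroup of $N = G$ is $G_\alpha$-invariant. Hence the criterion becomes: every subgroup of $G$ is normal in $G$, which is precisely the definition of a Dedekind group. That finishes the corollary.

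There is essentially no obstacle here; the only thing to watch is the bookkeeping identification of $\Omega$ with $N$ and the verification that $G_\alpha$ is trivial in the regular action (so that the hypothesis of Theorem~\ref{t:rns} specialises cleanly). One could alternatively derive the corollary from scratch via the observations (a) and (b) recorded just before Theorem~\ref{t:rns}: a partition of $G$ is $G$-invariant exactly when it is a right coset partition of a subgroup, and it is the orbit partition of a subgroup exactly when it is a left coset partition; these two partitions coincide for every subgroup $H$ if and only if every $H$ is normal, i.e.\ $G$ is Dedekind. This second route makes the content of Theorem~\ref{t:dedekind} available but is not needed in the proof itself --- Theorem~\ref{t:dedekind} is cited only to describe the Dedekind groups, not to establish the equivalence.
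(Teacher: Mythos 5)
Your proof is correct and matches the paper's own argument: both specialise Theorem~\ref{t:rns} to $N=G$, $G_\alpha=1$, so that every subgroup is vacuously $G_\alpha$-invariant and pre-primitivity reduces to every subgroup being normal, i.e.\ $G$ being Dedekind. The alternative route you sketch via the left/right coset observations is also exactly the informal argument the paper records just before Theorem~\ref{t:rns}.
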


\begin{proof}
In this case, Theorem~\ref{t:rns} applies with $N=G$ and $G_\alpha=1$. Thus
every subgroup of $N$ is $G_\alpha$-invariant; so $G$ is pre-primitive if and
only if every subgroup of $N$ is normal. \qed
\end{proof}

The \emph{holomorph} of a group $G$ is the semidirect product of $G$ with
$\Aut(G)$. It acts as a permutation group on $G$, where $G$ acts by right
multplication and $\Aut(G)$ in the natural way.

\begin{cor}
For any finite group $G$, the holomorph of $G$ is pre-primitive.
\end{cor}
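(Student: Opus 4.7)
The plan is to invoke Theorem~\ref{t:rns} directly. In the holomorph $G\rtimes\Aut(G)$ acting on $G$, the subgroup $G$ (acting by right multiplication) is a regular normal subgroup, and the stabiliser of the identity is $\Aut(G)$ acting in the natural way. So the hypotheses of Theorem~\ref{t:rns} are in force with $N=G$ and $G_\alpha=\Aut(G)$.

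Next, I would identify what ``$G_\alpha$-invariant subgroup of $N$'' means in this setting: a subgroup $H\le G$ that is fixed setwise by every element of $\Aut(G)$ is by definition a \emph{characteristic} subgroup of $G$. So the condition we need to verify from Theorem~\ref{t:rns} is that every characteristic subgroup of $G$ is normal in $G$.

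This last statement is immediate, since $\Inn(G)\le\Aut(G)$, so any $\Aut(G)$-invariant subgroup is in particular $\Inn(G)$-invariant, hence normal. There is no obstacle here: the corollary is just the observation that characteristic subgroups are normal, packaged through Theorem~\ref{t:rns}. The proof will be one or two sentences.
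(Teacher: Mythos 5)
Your proposal is correct and is essentially identical to the paper's own proof: both apply Theorem~\ref{t:rns} with $N=G$ and $G_\alpha=\Aut(G)$, note that $G_\alpha$-invariant subgroups are exactly the characteristic subgroups, and conclude since characteristic subgroups are normal. Nothing to add.
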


\begin{proof}
The group $G$ is a regular normal subgroup of its holomorph. A subgroup $H$ of
$G$ is $\Aut(G)$-invariant if and only if it is characteristic; and a
characteristic subgroup is certainly normal. \qed
\end{proof}

The proof actually shows a stronger result: the semidirect product of $G$ by
its inner automorphism group $\Inn(G)$ is pre-primitive.

Finally, since a transitive abelian group is regular, and a direct product of
regular groups (in its product action) is regular, we have the following:

\begin{cor}\label{cor-direct-abelian}
The direct product of transitive abelian groups in its product action is 
pre-primitive.
\end{cor}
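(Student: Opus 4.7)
The plan is to combine the observations already flagged in the paragraph preceding the corollary with Corollary~\ref{c:regular}. Concretely, I would argue in three short steps: first, recall that any transitive abelian group is regular (a standard fact: the point stabiliser is normal, hence trivial, since all its conjugates fix the same point under abelianness and transitivity). Second, show that the product action of a direct product of regular groups is again regular; this is immediate since $|G_1\times\cdots\times G_n|=|G_1|\cdots|G_n|=|\Omega_1|\cdots|\Omega_n|=|\Omega_1\times\cdots\times\Omega_n|$, and componentwise transitivity gives transitivity on the product. Third, observe that a direct product of abelian groups is abelian, hence in particular a Dedekind group by Theorem~\ref{t:dedekind}.

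Putting these together, if $G_1,\ldots,G_n$ are transitive abelian groups on $\Omega_1,\ldots,\Omega_n$, then $G=G_1\times\cdots\times G_n$ acts regularly on $\Omega=\Omega_1\times\cdots\times\Omega_n$ and is itself abelian. Corollary~\ref{c:regular} then immediately yields that $G$ is pre-primitive.

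There is no real obstacle here; the statement is essentially a one-line deduction from Corollary~\ref{c:regular} once the two auxiliary facts (transitive abelian $\Rightarrow$ regular, and regular $\times$ regular $\Rightarrow$ regular in the product action) are noted. The only thing to be slightly careful about is to make sure the product action really is the regular action of the product group under the natural identification $\Omega_i\leftrightarrow G_i$; but this is routine and already built into the preamble before the corollary. Hence the proof should be just a couple of sentences invoking Corollary~\ref{c:regular}.
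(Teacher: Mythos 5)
Your proof is correct and follows exactly the route the paper takes: transitive abelian implies regular, a product of regular groups is regular in the product action, and an abelian group is Dedekind, so Corollary~\ref{c:regular} applies. Nothing further is needed.
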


\subsection{Direct products}

Next we consider various product constructions for transitive groups and ask,
is it true that if the factors are pre-primitive, then so is the product?
First, the direct product in its product action.

Let $G$ and $H$ be permutation groups on $\Gamma$ and $\Delta$ respectively.
Then the direct product $G\times H$ acts coordinatewise on $\Gamma\times\Delta$,
by $(\gamma,\delta)(g,h)=(\gamma g,\delta h)$. If $G$ and $H$ are transitive
then $G\times H$ is transitive in this action.

It follows from Corollary~\ref{c:regular} that, if two transitive groups
are pre-primitive, then their direct product (in its product action) is 
pre-primitive if the factors are abelian, but may fail to be pre-primitive in
general. (Take two copies of $Q_8$ acting regularly: $Q_8$ is a
Dedekind group but $Q_8\times Q_8$ is not.) So it is natural to ask what
additional conditions on the factors will guarantee pre-primitivity of the
product. To examine these, we look more closely at partitions invariant under
$G\times H$.

Suppose that $G$ and $H$ act transitively on $\Gamma$ and $\Delta$ respectively,
and let $\Pi$ be a $(G\times H)$-invariant partition of $\Gamma\times\Delta$.
We define two partitions of $\Gamma$ as follows.
\begin{itemize}
\item Let $P$ be a part of $\Pi$. Let $P_0$ be the subset of $\Gamma$ defined
by 
\[P_0=\{\gamma\in\Gamma:(\exists\delta\in\Delta)(\gamma,\delta)\in P\}.\]
We claim that the sets $P_0$ arising in this way are pairwise disjoint. For
suppose that $\gamma\in P_0\cap Q_0$, where $Q_0$ is defined similarly from
another part $Q$ of $\Pi$; suppose that $(\gamma,\delta_1)\in P$ and
$(\gamma,\delta_2)\in Q$. There is an element $h\in H$ mapping $\delta_1$ to
$\delta_2$. Then $(1,h)$ maps $(\gamma,\delta_1)$ to $(\gamma,\delta_2)$, and
hence maps $P$ to $Q$, and $P_0$ to $Q_0$; but this element acts trivially on
$\Gamma$, so $P_0=Q_0$. It follows that the sets $P_0$ arising in this way
form a partition of $\Gamma$, which we call the \emph{$G$-projection partition}.
\item Choose a fixed $\delta\in\Delta$, and consider the intersections of
the parts of $\Pi$ with $\Gamma\times\{\delta\}$. These form a partition of
$\Gamma\times\{\delta\}$ and so, by ignoring the second factor, we obtain a
partition of $\Gamma$ called the \emph{$G$-fibre partition}. Now the action
of the group $\{1\}\times H$ shows that it is independent of the element
$\delta\in\Delta$ chosen.
\end{itemize}
We note that the $G$-projection partition and the $G$-fibre partition are both
$G$-invariant, and the second is a refinement of the first. In a similar way
we get $H$-fibre and $H$-projection partitions of $\Delta$, both $H$-invariant.

For a non-trivial example, consider the group $C_8\times C_8$ acting on
$\Gamma\times\Delta$, where each of $\Gamma$ and $\Delta$ is a copy of the
integers modulo~$8$. Take 
\[P=\{(0,0),(0,4),(4,0),(4,4),(2,2),(2,6),(6,2),(6,6)\}.\]

The images of $P$ under $C_8\times C_8$ form a partition with eight parts;
its projection partition on the first coordinate has two parts consisting of
the even and odd elements, while its fibre partition has four parts consisting
of the cosets of $\{0,4\}$.

The next lemma gives some properties of these partitions. First, some
definitions. 

The partial order of \emph{refinement} is defined on partitions of $\Omega$
by the rule that, for partitions $\Pi$ and $\Sigma$, we have
$\Pi\preceq\Sigma$ (read $\Pi$ refines $\Sigma$) if every part of $\Sigma$
is a union of parts of $\Pi$.

If $\Pi$ and $\Sigma$ are partitions of $\Gamma$ and $\Delta$ respectively,
then their \emph{cartesian product} $\Pi\times\Sigma$ is the partition of
$\Gamma\times\Delta$ whose parts are all cartesian products of a part of $\Pi$
and a part of $\Sigma$.

\begin{lemma} Let $G$ and $H$ be transitive permutation groups on $\Gamma$
and $\Delta$ respectively, and let $\Pi$ be a $G$-invariant partition of
$\Gamma\times\Delta$.
\begin{enumerate}
\item The $G$-projection and $G$-fibre partitions of $\Gamma$ are $G$-invariant,
and the second is a refinement of the first.
\item The number $k$ of parts of the $G$-fibre partition contained in a part of
the $G$-projection partition is equal to the corresponding number for $H$.
\item If $k=1$, then $\Pi$ is the cartesian product of a
$G$-invariant partition of $\Gamma$ and an $H$-invariant partition of $\Delta$.
\item If $k>1$, then the set of $k^2$ parts of $F_G\times F_H$ within a part
of $P_G\times P_H$ has the structure of a Latin square, where the first
and second coordinates define the square grid and the parts of $P$ give the
positions of the letters.
\end{enumerate}
\label{lemma:fibre_proj}
\end{lemma}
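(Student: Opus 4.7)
My plan is to derive all four parts from a single structural observation: for each $R$ in the $G$-projection partition, the parts of $\Pi$ with $G$-projection $R$ cover all of $R\times\Delta$; the $H$-analog then forces $\Pi$ to restrict to a partition of each block $R\times S$ into a constant number $k$ of parts.

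For (a), the action of $G\times\{1\}$ permutes parts of $\Pi$ and acts on $\Gamma$ via the first coordinate, so it preserves the images $P_0$ and also the intersections $P\cap(\Gamma\times\{\delta\})$; the refinement claim is immediate since any $\gamma$ witnessed by $(\gamma,\delta)\in P$ also lies in $P_0$. For (b), let $U_R$ be the union of parts $P$ of $\Pi$ with $P_0=R$. Since $\{1\}\times H$ acts trivially on $\Gamma$, it preserves the property $P_0=R$ and hence $U_R$; by $H$-transitivity, the projection of $U_R$ onto $\Delta$ is all of $\Delta$. Then for $\gamma\in R$ and any $\delta$, taking $\gamma_0$ with $(\gamma_0,\delta)\in U_R$ and $g\in G$ with $g\gamma_0=\gamma$, the fact that $G$ permutes the blocks of the $G$-projection partition forces $gR=R$ setwise, so $(g,1)$ preserves $U_R$ and hence $(\gamma,\delta)\in U_R$. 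Thus $U_R=R\times\Delta$. The symmetric statement for $H$ then gives that the parts with $G$-projection $R$ and $H$-projection $S$ tile $R\times S$; let $k$ be their number. Each such part meets $R\times\{\delta\}$ nontrivially for every $\delta\in S$ (as $S$ is its $H$-projection), so projection to $\Gamma$ produces exactly $k$ parts of the $G$-fibre partition inside $R$; by symmetry the $H$-fibre partition inside $S$ also has $k$ parts, giving (b).

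Part (c) is then immediate: $k=1$ forces each $R\times S$ to be a part of $\Pi$, so $\Pi$ is the cartesian product of the two projection partitions. For (d), with $k>1$, the key claim is that each cell $R_i\times S_j$ lies in a single part $P_l$ of $\Pi$. Given $(\gamma_1,\delta_1),(\gamma_2,\delta_2)\in R_i\times S_j$, I would route through $(\gamma_2,\delta_1)$: this point lies in the same $\Pi$-part as $(\gamma_1,\delta_1)$ because $\gamma_1,\gamma_2$ lie in a common $G$-fibre part (computed at $\delta_1$), and in the same $\Pi$-part as $(\gamma_2,\delta_2)$ because $\delta_1,\delta_2$ lie in a common $H$-fibre part (computed at $\gamma_2$). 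The resulting function $L(i,j)=l$ is a Latin square because, fixing $\delta_0\in S_{j_0}$, the slice $R\times\{\delta_0\}$ decomposes as $R_1\times\{\delta_0\},\dots,R_k\times\{\delta_0\}$, with each piece lying in exactly one of the $k$ parts of $\Pi$ inside $R\times S$; hence $i\mapsto L(i,j_0)$ is a bijection, and the analogous argument on rows completes the proof.

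The main obstacle I anticipate is the step $U_R=R\times\Delta$: it combines $H$-transitivity on $\Delta$ with the fact that $G$ permutes the blocks of the $G$-projection partition, and it is where the underlying product structure of $\Pi$ first becomes visible. Once that is in hand, parts (c) and (d) reduce to the combinatorial bookkeeping sketched above.
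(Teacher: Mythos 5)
Your proposal is correct and follows essentially the same route as the paper: both establish that $\Pi$ is sandwiched between $F_G\times F_H$ and $P_G\times P_H$ (your ``routing through $(\gamma_2,\delta_1)$'' step is exactly the paper's proof of the first refinement), count fibre parts by slicing $R\times S$ with $R\times\{\delta\}$, and obtain the Latin square from the fact that each part of $\Pi$ meets each row and column of the grid in exactly one cell. The only cosmetic difference is that your derivation of $U_R=R\times\Delta$ is more elaborate than needed (it follows immediately from the pairwise disjointness of the sets $P_0$), but this does not affect correctness.
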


\begin{proof}
The first statement is clear from the definition.

For the second, let $P_G$ and $F_G$ be the $G$-projection and $G$-fibre
partitions of $\Gamma$, and $P_H$ and $F_H$ the corresponding partitions of
$\Delta$. We claim first that
\[
F_G\times F_H\preceq P\preceq P_G\times P_H.
\]
The second inequality is clear since, if two elements of $\Gamma\times\Delta$
lie in the same part of $P$, then their projections onto $\Gamma$ lie in
the same part of $P_G$, and similarly for $\Delta$ and $P_H$. For the first
inequality, suppose that $\gamma_1$ and $\gamma_2$ lie in the same part of
$F_G$, and $\delta_1$ and $\delta_2$ in the same part of $F_H$. Then there
exists $\delta\in\Delta$ such that $(\gamma_1,\delta)$ and $(\gamma_2,\delta)$
lie in the same part of $\Pi$; applying an element of $\{1\}\times H$, we can
assume that $\delta=\delta_1$. Similarly, we can assume that
$(\gamma_1,\delta_1)$ and $(\gamma_1,\delta_2)$ belong to the same part of
$\Pi$. Now transitivity gives the result.

Now let $A$ and $B$ be parts of $P_G$ and $P_H$, and choose a part $P$ of $\Pi$
which projects onto $A$ and $B$. Any point of $P$ belongs to $a\times b$ for
some parts $a,b$ of $F_G$ and $F_H$ respectively. This induces a bijection
between the parts of $F_G$ in $A$ and the parts of $F_H$ in $B$.

The third part is now clear. 

For the final part, note that the parts of $F_G\times F_H$ within $A\times B$
have the structure of a $k\times k$ square grid. Choose a part $P$ of $\Pi$
within $A\times B$. Then $P$ is a union of parts of $F_G\times F_H$. By the
definition of the fibre partition, the first coordinates of pairs in $P$ with
given second coordinate form a single part of $F_G$, so $P$ contains just one
part of $F_G\times F_H$ within any row of the grid; and similarly for columns.
Since every part of $F_G\times F_H$ is contained in a unique part of $P$, the
result is proved.
\qed
\end{proof}

\begin{thm}\label{t:prim_x_prim}
Let $G \leq \Sym(\Gamma)$ and $H \leq \Sym(\Delta)$ be transitive and let $G\times H$ act component-wise on $\Gamma \times \Delta$. If both $G$ and $H$ are primitive, then $G\times H$ is pre-primitive. 
\end{thm}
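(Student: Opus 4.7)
The plan is to take an arbitrary $(G\times H)$-invariant partition $\Pi$ of $\Gamma\times\Delta$ and apply Lemma~\ref{lemma:fibre_proj}, organising by the integer $k$ defined there. Because $G$ is primitive, both the $G$-projection and $G$-fibre partitions of $\Gamma$ are trivial, and since the fibre refines the projection this leaves only three combinations; the two in which projection and fibre coincide give $k=1$, while fibre into singletons with projection equal to $\{\Gamma\}$ gives $k=|\Gamma|$. The analogous statement holds for $H$, and since the common value of $k$ is forced by Lemma~\ref{lemma:fibre_proj}(b), I will treat the cases $k=1$ and $k>1$ separately.

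For $k=1$, Lemma~\ref{lemma:fibre_proj}(c) writes $\Pi$ as a Cartesian product $\Pi_\Gamma\times\Pi_\Delta$ of a $G$-invariant partition of $\Gamma$ and an $H$-invariant partition of $\Delta$. Primitivity forces each factor to be trivial, and the four resulting product partitions are the orbit partitions of $\{1\}$, $G\times\{1\}$, $\{1\}\times H$, and $G\times H$ respectively.

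For $k>1$ we have $|\Gamma|=|\Delta|=k$, and by Lemma~\ref{lemma:fibre_proj}(d) each part of $\Pi$ is the graph of a bijection $\pi_P\colon\Gamma\to\Delta$. The key step is to show that $G$ must act regularly on $\Gamma$. Fixing $\alpha\in\Gamma$, note first that $G\times\{1\}$ is transitive on the $k$ parts, because $G$ is transitive on any cross-section $\Gamma\times\{\delta_0\}$ and that cross-section meets each part in exactly one point. On the other hand, every $g\in G_\alpha$ fixes the point $(\alpha,\pi_P(\alpha))$ in each part $P$ and so preserves $P$ setwise, hence $G_\alpha$ lies in the kernel of the action of $G$ on the $k$ parts. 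Counting orders forces the kernel to equal $G_\alpha$, whence $G_\alpha\triangleleft G$; faithfulness of the action of $G$ on $\Gamma$ then gives $G_\alpha=1$, so $G$ is regular, and a regular primitive group must be cyclic of prime order $p$. The same reasoning applied to $H$ yields $H\cong C_p$, so $G\times H$ is the product action of two transitive abelian groups, hence pre-primitive by Corollary~\ref{cor-direct-abelian}; in particular the chosen $\Pi$ is an orbit partition of a subgroup.

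The main obstacle will be the structural step in the case $k>1$: seeing that the Latin-square constraint of Lemma~\ref{lemma:fibre_proj}(d) forces $G_\alpha$ into the kernel of the action on parts and so collapses $G$ (and symmetrically $H$) to a cyclic group of prime order. Once that is in hand the conclusion is immediate from Corollary~\ref{cor-direct-abelian}.
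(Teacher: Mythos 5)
Your proposal is correct and follows essentially the same route as the paper: reduce via the fibre/projection partitions to the four product cases plus the one ``diagonal'' case where each part of $\Pi$ is the graph of a bijection $\Gamma\to\Delta$, deduce regularity there, and invoke Corollary~\ref{cor-direct-abelian}. The only (harmless) variation is in how regularity is extracted in the diagonal case: the paper observes directly that the stabiliser of $(\gamma,\delta)$ must fix every point of its part and hence be trivial, whereas you show $G_\alpha$ equals the kernel of the action on the parts, hence is normal and therefore trivial.
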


\begin{proof}
Since both $G$ and $H$ are primitive, it follows that given a $G\times H$-invariant partition $\Pi$ of $\Gamma\times \Delta$ the $G$ and $H$-fibre partitions and the $G$ and $H$-projection partitions must be trivial. If $|\Gamma| \neq |\Delta|$, then $\Pi$ is one of four possibilities: $\{\Gamma \times \Delta\}$, (the partition with a single part), $\{\{(\gamma, \delta)\} \mid \gamma \in \Gamma, \delta\in \Delta\}$, (the partition into singletons), $\{\Gamma \times \{\delta\} \mid \delta\in \Delta\}$, and $\{\{\gamma\} \times \Delta \mid \gamma \in \Gamma\}$.
Then $\Pi$ is the orbit partition of $G\times H$, $1$, $G\times 1$, $1\times H$ respectively, and hence $G\times H$ is pre-primitive. 

If $|\Gamma| = |\Delta|$, then there is one additional case to consider, namely the one where the $G$ and $H$-fibre partitions are the singletons, and the $G$ and $H$-projection partitions consist of a single part. 
Each part of $P$ induces a bijection between $\Gamma$ and $\Delta$, so the
stabiliser of a point $(\gamma,\delta)$ fixes every point in the part of $P$
containing $(\gamma,\delta)$, and hence is the identity. So $G\times H$ is
regular, whence also $G$ and $H$ are regular. Since they are also primitive,
they are cyclic of prime order, whence $G\times H$ is abelian, and hence by
Corollary~\ref{cor-direct-abelian} it is pre-primitive. \qed
\end{proof}

\begin{thm}
Let $G \leq \Sym(\Gamma)$ and $H \leq \Sym(\Delta)$ be transitive and let $G\times H$ act component-wise on $\Gamma \times \Delta$. If $G$ and $H$ are pre-primitive and the sizes of $\Gamma$ and $\Delta$ are coprime, then $G\times H$ is pre-primitive.
\end{thm}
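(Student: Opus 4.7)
\medskip\noindent\textbf{Proof plan.} The plan is to reduce any $(G\times H)$-invariant partition $\Pi$ of $\Gamma\times\Delta$ to a cartesian product of partitions of the factors by exploiting the coprimality hypothesis, and then invoke pre-primitivity of $G$ and $H$ separately.

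First I would take an arbitrary $(G\times H)$-invariant partition $\Pi$ and form the four partitions provided by Lemma~\ref{lemma:fibre_proj}: the $G$-projection and $G$-fibre partitions $P_G, F_G$ of $\Gamma$, and analogously $P_H, F_H$ of $\Delta$. Let $k$ be the common number of $F_G$-parts contained in a part of $P_G$ (equal, by part~(b) of the lemma, to the number of $F_H$-parts in a part of $P_H$). Since $G$ is transitive on $\Gamma$, all parts of $P_G$ have the same size $a$ and all parts of $F_G$ have the same size $b$, with $k=a/b$; in particular $k$ divides $a$, which divides $|\Gamma|$, so $k\mid|\Gamma|$. The same argument on the $H$-side gives $k\mid|\Delta|$. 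Coprimality of $|\Gamma|$ and $|\Delta|$ forces $k=1$.

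Now part~(c) of Lemma~\ref{lemma:fibre_proj} applies: $\Pi$ is the cartesian product $\Pi_G\times\Pi_H$ of a $G$-invariant partition $\Pi_G$ of $\Gamma$ and an $H$-invariant partition $\Pi_H$ of $\Delta$. Using pre-primitivity of $G$ and $H$, together with Proposition~\ref{p:normal}, we find normal subgroups $M\trianglelefteq G$ and $N\trianglelefteq H$ whose orbit partitions on $\Gamma$ and $\Delta$ are $\Pi_G$ and $\Pi_H$ respectively. Then $M\times N$ is a normal subgroup of $G\times H$, and its orbits on $\Gamma\times\Delta$ are exactly the cartesian products of an $M$-orbit and an $N$-orbit, which is $\Pi_G\times\Pi_H=\Pi$. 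Hence $\Pi$ is the orbit partition of a normal subgroup of $G\times H$, so $G\times H$ is pre-primitive.

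The only substantive step is the divisibility observation $k\mid|\Gamma|$ and $k\mid|\Delta|$ which, together with Lemma~\ref{lemma:fibre_proj}, does all the work; the rest is bookkeeping. The main obstacle I would expect is merely to state the divisibility argument cleanly and to check that the cartesian product of two orbit partitions really is the orbit partition of the direct product of the acting groups on the product set --- a short verification that is essentially immediate from the componentwise definition of the action.
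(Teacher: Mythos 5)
Your proof is correct and follows essentially the same route as the paper: both use Lemma~\ref{lemma:fibre_proj} to get the divisibility $k\mid|\Gamma|$ and $k\mid|\Delta|$, deduce $k=1$ from coprimality so that $\Pi$ is a cartesian product $P_G\times P_H$, and then take the direct product of the subgroups whose orbit partitions realise $P_G$ and $P_H$. Your write-up is in fact slightly more explicit than the paper's (which leaves the step $k=1$ and the appeal to part~(c) implicit), but there is no substantive difference.
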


\begin{proof}
Let $\Pi$ be a $G\times H$-invariant partition. We denote the $G$ and $H$-fibre partitions by $\Sigma_{\Gamma}$ and $\Sigma_{\Delta}$ respectively, and we let $P_G$ and $P_H$ be the $G$- and $H$-projection partitions respectively.

Let $k$ be the number of parts of the $G$-fibre partition in a part of $P_G$.
Then $k$ divides $|\Gamma|$, since the product of the size of a part of the
$G$-fibre partition times $k$ times the number of parts of $P_G$ is equal
to $|\Gamma|$. Similarly $k$ divides $\Delta$. Thus $\Pi=P_G\times P_H$,
by Lemma~\ref{lemma:fibre_proj}.
Since $P_G$ and $P_H$ are orbit partitions of subgroups $G^*$ and $H^*$ of
$G$ and $H$ respectively, $\Pi$ is the orbit partition of $G^*\times H^*$.
\qed
\end{proof}

\begin{cor}
Let $G \leq \Sym(\Gamma)$, $H\leq \Sym(\Delta)$ be transitive and regular. The direct product $G\times H$ acting on $\Gamma \times \Delta$ componentwise is pre-primitive if and only if either $G$ and $H$ are abelian, or one is a
non-abelian Dedekind group and the other an abelian group whose exponent is not
divisible by $4$.
\end{cor}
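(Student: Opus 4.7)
The plan is to combine Corollary~\ref{c:regular} with the structure theorem for Dedekind groups (Theorem~\ref{t:dedekind}). Since $G$ and $H$ are regular, the componentwise action of $G\times H$ on $\Gamma\times\Delta$ is transitive with trivial point stabiliser, hence regular. So by Corollary~\ref{c:regular}, $G\times H$ is pre-primitive if and only if $G\times H$ is a Dedekind group, and the remaining task is purely group-theoretic: to translate ``$G\times H$ Dedekind'' into a condition on $G$ and $H$ separately.

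By Theorem~\ref{t:dedekind}, $G\times H$ is Dedekind precisely when it is abelian, or else isomorphic to $Q_8\times A\times B$ with $A$ an elementary abelian $2$-group and $B$ an abelian group of odd order. The abelian case corresponds exactly to $G$ and $H$ both being abelian, giving the first clause of the corollary. For the non-abelian case I would use two standard observations: every subgroup of a Dedekind group is Dedekind (since the normality condition is inherited), so $G$ and $H$ are individually Dedekind; and the Sylow $2$-subgroup of $G\times H$, which equals $S_2(G)\times S_2(H)$, must have the form $Q_8\times(\text{elementary abelian})$.

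The crucial step is that $Q_8$ is directly indecomposable, having a unique subgroup of order~$2$ (its centre). Hence by the Krull--Schmidt theorem the $Q_8$ direct factor of $G\times H$ must lie inside $G$ or $H$; after swapping if necessary, say inside $G$. Then $G$ is a non-abelian Dedekind group, $H$ is abelian, and the Sylow $2$-subgroup of $H$ is forced to be elementary abelian, equivalently $H$ has exponent not divisible by~$4$. For the converse direction one simply writes $G\cong Q_8\times A_G\times B_G$ and $H\cong A_H\times B_H$ via Theorem~\ref{t:dedekind} and the exponent hypothesis, then regroups to get $G\times H\cong Q_8\times(A_G\times A_H)\times(B_G\times B_H)$, which is Dedekind.

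The main potential obstacle is the uniqueness-of-factorisation step: one has to rule out the possibility that $Q_8$ is somehow ``split'' across both $G$ and $H$. I would handle this by the direct observation above, noting that any putative nontrivial direct decomposition of $Q_8$ would force multiple subgroups of order~$2$, contradicting the description of $Q_8$; this avoids invoking the full weight of Krull--Schmidt. Everything else is routine bookkeeping with Sylow subgroups and the classification in Theorem~\ref{t:dedekind}.
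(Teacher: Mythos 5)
Your proposal is correct and follows essentially the same route as the paper: observe that $G\times H$ is regular on $\Gamma\times\Delta$, apply Corollary~\ref{c:regular} to reduce pre-primitivity to the Dedekind property, and then invoke Theorem~\ref{t:dedekind}. The paper states the final translation into separate conditions on $G$ and $H$ without proof, whereas you supply the Krull--Schmidt/Sylow bookkeeping explicitly (and correctly); the only small caveat is that indecomposability of $Q_8$ alone does not replace Krull--Schmidt for locating the $Q_8$ factor, but since you do invoke Krull--Schmidt as the main argument, the proof stands.
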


\begin{proof}
If $G$ and $H$ both act regulary on $\Gamma$ and $\Delta$ respectively, then $G\times H$ acts regularly on $\Gamma \times \Delta$. Therefore, $G\times H$ is pre-primitive if and only if it is Dedekind, which happens only in the two cases
stated, by Dedekind's theorem (Theorem~\ref{t:dedekind}).  \qed
\end{proof}

\begin{thm}
Let $G \leq \Sym(\Gamma)$ and $H\leq \Sym(\Delta)$ be pre-primitive. Suppose
that every $G\times H$-invariant partition $\Pi$ of $\Gamma \times \Delta$ is of
one of the following types:
\begin{itemize}
\item The $G$-fibre partition induced by $\Pi$ on $\Gamma$ is the same as the $G$-projection partition and the $H$-fibre partition induced by $\Pi$ on $\Delta$ is the same as the $H$-projection partition;
\item The $G$ and $H$-projection partitions induced by $\Pi$ on $\Gamma$ and $\Delta$ respectively are the partitions with a single part.
\end{itemize}
then $G\times H$ in its product action is pre-primitive.
\end{thm}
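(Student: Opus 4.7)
The plan is to handle the two types of partition in the hypothesis separately. For the first type, where $F_G = P_G$ and $F_H = P_H$, Lemma~\ref{lemma:fibre_proj}(c) (applied with $k = 1$) gives $\Pi = P_G \times P_H$. By pre-primitivity of $G$, the invariant partition $P_G$ is the orbit partition of a normal subgroup $N_G \unlhd G$; similarly $P_H$ is the orbit partition of some $N_H \unlhd H$. The direct product $N_G \times N_H$ is then normal in $G \times H$ and has orbit partition $P_G \times P_H = \Pi$, completing this case.

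For the second type, where $P_G = \{\Gamma\}$ and $P_H = \{\Delta\}$, I would apply pre-primitivity to the fibre partitions $F_G$ and $F_H$, obtaining normal subgroups $N_G \unlhd G$ and $N_H \unlhd H$ whose orbit partitions are $F_G$ and $F_H$ respectively. By Lemma~\ref{lemma:fibre_proj}(d) the $k$ parts of $\Pi$ are arranged as a Latin-square pattern on the cells of $F_G \times F_H$. The natural candidate subgroup for witnessing $\Pi$ as an orbit partition is $N$, the kernel of the induced action of $G \times H$ on $\Pi$; this is normal in $G \times H$, contains $N_G \times N_H$, and has an orbit partition on $\Gamma \times \Delta$ refining $\Pi$. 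The task is then to show that this refinement is an equality.

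The main obstacle is proving that $N$ is transitive on each part of $\Pi$. My approach is to argue by contradiction: suppose that the $N$-orbit partition $\Pi'$ is strictly finer than $\Pi$. Since $N$ is normal, $\Pi'$ is itself a $(G\times H)$-invariant partition, and by the hypothesis of the theorem it must be of one of the two listed types. I would then compare the fibre and projection partitions of $\Pi$ and $\Pi'$: the image of $N$ under projection to $G$ contains $N_G$, which forces $F_G \preceq P'_G$, while $\Pi'\preceq\Pi$ forces $F'_G \preceq F_G$. In the type-1 sub-case (where $F'_G = P'_G$) these inequalities pin $\Pi' = F_G \times F_H$, and the Latin-square structure of $\Pi$ is then incompatible with any subgroup of $N$ witnessing $\Pi$ as an orbit partition; in the type-2 sub-case, a finite-descent argument reduces to the type-1 case because a strictly decreasing chain of invariant partitions on a finite set must terminate. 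The resulting contradiction gives $\Pi'=\Pi$, so that $\Pi$ is the orbit partition of the normal subgroup $N$, completing the proof.
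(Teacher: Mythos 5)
Your treatment of the first type of partition is correct and is exactly the paper's argument: $k=1$ forces $\Pi=P_G\times P_H$, and pre-primitivity of the factors supplies $N_G\times N_H$. The problem is the second type, where your argument is circular rather than a genuine proof by contradiction. You take $N$ to be the kernel of the action on $\Pi$, assume its orbit partition $\Pi'$ is strictly finer than $\Pi$, and in the type-1 sub-case conclude that $\Pi'=F_G\times F_H$ and hence that ``the Latin-square structure of $\Pi$ is incompatible with any subgroup of $N$ witnessing $\Pi$ as an orbit partition.'' But that last statement is not in conflict with anything you have assumed or established: a subgroup whose orbits are the parts of $\Pi$ would necessarily lie inside $N$ and force $\Pi'=\Pi$, so ``$\Pi'\prec\Pi$ strictly'' and ``no subgroup witnesses $\Pi$'' are the \emph{same} assertion, not contradictory ones. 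You have merely restated the failure of pre-primitivity at $\Pi$; nothing rules it out. The finite-descent reduction of the type-2 sub-case inherits the same defect, so no contradiction is ever reached.

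What is missing is the paper's key structural step, which actually \emph{uses} the hypothesis on a partition other than $\Pi$. Taking $G^*$ and $H^*$ to be the full kernels of the actions on $F_G$ and $F_H$, the Latin-square structure (Lemma~\ref{lemma:fibre_proj}(d), via the argument of Theorem~\ref{t:prim_x_prim}) shows that $G/G^*$ and $H/H^*$ act regularly on the parts of the fibre partitions and are isomorphic. If $G/G^*$ had a non-trivial proper subgroup $L/G^*$, the preimage in $G\times H$ of a diagonal subgroup of $L/G^*\times M/H^*$ would give rise to an invariant partition of $\Gamma\times\Delta$ that is of neither permitted type, contradicting the hypothesis of the theorem. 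Hence $G/G^*\cong H/H^*\cong C_p$, and only then can one exhibit an actual witnessing subgroup: the preimage of a diagonal subgroup of $C_p\times C_p$ (normal because $C_p\times C_p$ is abelian), whose orbits are precisely the parts of $\Pi$. Your proposal never produces such a subgroup, and without invoking the hypothesis on these auxiliary partitions there is no way to exclude, for example, the regular action of $Q_8\times Q_8$, where a partition of the second type genuinely fails to be an orbit partition.
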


\begin{proof}
If $\Pi$ is of the first form, then by Lemma~\ref{lemma:fibre_proj},
$\Pi = P_G\times P_H$.
Since $G$ is pre-primitive, there exists a subgroup $G^*$ of $G$ whose orbit partition is $P_G$, and similarly there exists some $H^*\leq H$ whose orbit partition is $P_H$. Now it is easy to see that the orbit partition of $G^*\times H^*$ is $\Pi$.

Now suppose that $\Pi$ is of the second type and let $F_G$ and $F_H$ be the $G$- and $H$-fibre partition induced by $\Pi$ on $\Gamma$ and $\Delta$ respectively.
As in the first part, there are subgroups $G^*$ and $H^*$ of $G$ and $H$
respectively (which can be chosen to be normal, by Proposition~\ref{p:normal})
whose orbit partitions are $F_G$ and $F_H$ respectively; and $F_G\times F_H$
is the orbit partition of $G^*\times H^*$. Moreover, we can take $G^*$ to
consist of all elements of $G$ fixing the parts of $F_G$, and similarly for 
$H^*$.

The group $(G\times H)/(G^*\times H^*)$ permutes (faithfully, by the above
remark) the parts of $F_G\times F_H$.
The argument in the second part of Theorem~\ref{t:prim_x_prim} shows that
$G/G^*$ and $H/H^*$ are isomorphic and regular. If $G/G^*$ has a non-trivial
proper subgroup $L/G^*$, and $M/H^*$ is the corresponding subgroup of $H/H^*$,
then the inverse image in $G\times H$ of a diagonal subgroup of 
$L/G^*\times M/H^*$ defines a $G$-invariant partition not of the form in the
theorem. So $G/G^*$ and $H/H^*$ are of prime order $p$. Then $\Pi$ is the
orbit partition of a subgroup of $G\times H$ whose projection onto 
$(G\times H)/(G^*\times H^*)$ is a diagonal subgroup of $C_p\times C_p$.
\qed
\end{proof}

In the next section, we will show a clean converse of these results: if
$G\times H$ is pre-primitive, then both $G$ and $H$ are pre-primitive.

\subsection{Wreath products, imprimitive action}

By contrast with direct products, wreath products are better behaved.
Again let $G$ and $H$ be permutation groups on $\Gamma$ and $\Delta$
respectively. Take $\Omega=\Gamma\times\Delta$, regarded as the disjoint
union of copies $\Gamma_\delta$ of $\Gamma$ indexed by $\Delta$, where
$\Gamma_\delta=\{(\gamma,\delta):\gamma\in\Gamma\}$. The partition of $\Omega$
into the sets $\Gamma_\delta$ will be called the \emph{canonical partition}.
Now $G\wr H$ is generated by 
\begin{itemize}
\item the \emph{base group}, the direct product of $|\Delta|$ copies of $G$
indexed by $\Delta$, where copy $G_\delta$ with index $\delta$ acts on
$\Gamma_\delta$ as $G$ acts on $\Gamma$ and fixes all the other parts of the
canonical partition pointwise;
\item the \emph{top group}, a copy of $H$ acting on the second coordinate of
points in $\Gamma\times\Delta$.
\end{itemize}

For further use, we note a property of this action.

\begin{lemma}
Let $G$ and $H$ be permutation groups on $\Gamma$ and $\Delta$ respectively.
Then the direct product $G\times H$ in its product action is a subgroup of the
wreath product $G\wr H$ in its imprimitive action.
\label{l:dpinwp}
\end{lemma}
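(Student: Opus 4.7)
The plan is to construct an explicit embedding $\iota: G\times H\to G\wr H$ whose image, acting on $\Gamma\times\Delta$ via the imprimitive action, gives exactly the product action described before Lemma~\ref{l:dpinwp}. Identify the base group of $G\wr H$ with the set of functions $f:\Delta\to G$, where $f$ acts on $\Omega=\Gamma\times\Delta$ by $(\gamma,\delta)f=(\gamma\cdot f(\delta),\delta)$; and identify the top group with $H$, acting by $(\gamma,\delta)h=(\gamma,\delta h)$. Recall that conjugation by $h$ in the wreath product permutes the base via $f^{h}(\delta)=f(\delta h^{-1})$.

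First I would define the diagonal embedding $d:G\to G\wr H$ by sending $g$ to the constant function $f_g$ with $f_g(\delta)=g$ for all $\delta\in\Delta$. Since $f_g$ is constant, $f_g^{h}=f_g$ for every $h\in H$; hence $d(G)$ commutes with the top group, and so $d(G)\cdot H$ is a subgroup of $G\wr H$ isomorphic to $G\times H$. Define $\iota(g,h)=f_g\cdot h$; the previous sentence shows $\iota$ is an injective homomorphism.

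Finally I would check that $\iota$ realises the product action: for any $(\gamma,\delta)\in\Omega$,
\[
(\gamma,\delta)\,\iota(g,h) \;=\; \bigl((\gamma,\delta)f_g\bigr)h \;=\; (\gamma\cdot f_g(\delta),\delta)h \;=\; (\gamma g,\delta h),
\]
which is precisely the product action of $G\times H$ on $\Gamma\times\Delta$. There is no substantive obstacle here; the only mild care needed is to confirm that the diagonal copy of $G$ inside the base is genuinely $H$-invariant (rather than merely normalised), so that $d(G)\cdot H$ is a direct product and not only a semidirect product. This is immediate from the constancy of $f_g$.
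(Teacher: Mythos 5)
Your proposal is correct and takes essentially the same approach as the paper: both identify $G\times H$ inside $G\wr H$ as the product of the diagonal subgroup of the base group (which you write as the constant functions $f_g$) with the top group, and observe that these commute and act on the first and second coordinates respectively. Your version merely spells out the computation that the paper leaves implicit.
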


\begin{proof}
The top group is isomorphic to $H$ and acts on the second coordinate. If $D$
is the diagonal subgroup of the base group, consisting of elements with all
coordinates equal, then $D$ is isomorphic to $G$ and acts on the first
coordinate. Together these subgroups generate $G\times H$ in its product action.
\qed
\end{proof}

\begin{prop}
Let $G$ and $H$ be transitive permutation groups on $\Gamma$ and $\Delta$
respectively and let $G\wr H$ have its imprimitive action on 
$\Omega=\Gamma\times\Delta$, with canonical partition $\Pi$. If $\Sigma$ is
any $(G\wr H)$-invariant partition, then either $\Sigma\preceq\Pi$ or
$\Pi\preceq\Sigma$.
\end{prop}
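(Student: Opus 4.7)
The plan is to dichotomize on whether any single part of $\Sigma$ meets more than one fibre $\Gamma_\delta$ of the canonical partition. If every part of $\Sigma$ lies inside some $\Gamma_\delta$, then by definition $\Sigma\preceq\Pi$ and we are done. Otherwise, I will show $\Pi\preceq\Sigma$, that is, each whole fibre $\Gamma_\delta$ is contained in a single part of $\Sigma$.

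So assume some part $S$ of $\Sigma$ contains two points $(\gamma_1,\delta_1)$ and $(\gamma_2,\delta_2)$ with $\delta_1\neq\delta_2$. The key step is to exploit the fact that the coordinate $G$-factor $G_{\delta_1}$ in the base group acts transitively on $\Gamma_{\delta_1}$ while fixing every point outside $\Gamma_{\delta_1}$ (in particular $(\gamma_2,\delta_2)$). Given any $\gamma'\in\Gamma$, pick $g\in G_{\delta_1}$ with $(\gamma_1,\delta_1)g=(\gamma',\delta_1)$. Since $\Sigma$ is $(G\wr H)$-invariant, $Sg$ is again a part of $\Sigma$; it contains $(\gamma_2,\delta_2)$ (because $g$ fixes that point) as well as the original $(\gamma_2,\delta_2)\in S$, so $Sg=S$. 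Hence $(\gamma',\delta_1)\in S$, and since $\gamma'$ was arbitrary, $\Gamma_{\delta_1}\subseteq S$.

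To finish, I propagate this to every other fibre using the top group. Since $H$ is transitive on $\Delta$, for any $\delta\in\Delta$ there is $h$ in the top group with $\Gamma_{\delta_1}h=\Gamma_\delta$; invariance of $\Sigma$ then forces $\Gamma_\delta$ to lie in the part $Sh$ of $\Sigma$. Hence every $\Gamma_\delta$ is contained in a single part of $\Sigma$, i.e., $\Pi\preceq\Sigma$.

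I do not anticipate a genuinely hard step: the argument is essentially bookkeeping with the two subgroups generating $G\wr H$, and the observation that a base-group element acting nontrivially on one fibre acts as the identity on every other fibre makes the ``crossing'' case collapse cleanly. The only point requiring a little care is that one must fix $(\gamma_2,\delta_2)$ (not $(\gamma_1,\delta_1)$) by the element $g$, so that the equality $S=Sg$ follows from the disjointness of distinct parts.
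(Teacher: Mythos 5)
Your proof is correct and uses essentially the same key move as the paper: a coordinate factor of the base group acts transitively on one fibre $\Gamma_{\delta_1}$ while fixing every other fibre pointwise, so the image of a part meeting two fibres still meets the original part and must equal it. The only difference is presentational — the paper runs the argument by contradiction (moving $A\cap\Gamma_\delta$ off itself to produce two distinct, non-disjoint parts), whereas you argue directly that such a part swallows the whole fibre and then propagate with the top group; your version is, if anything, slightly cleaner.
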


\begin{proof} 
Suppose not, and let $A$ be a part of $\Sigma$. Then $A$ intersects two parts
$\Gamma_\delta$ and $\Gamma_{\delta'}$ of $\Pi$ but contains neither. Now
$\Gamma_\delta\cap A$ is a part of a $G$-invariant partition of $\Gamma_\delta$;
by transitivity, we can find an element $g\in G_\delta$ which maps this set
to a disjoint subset of $\Gamma_\delta$. Then the element of the base group
which acts as $g$ on $\Gamma_\delta$ and the identity outside maps $A$ to
a set $A'$ which is neither equal to nor disjoint from $A$, contradicting the
assumption that $\Sigma$ is $(G\wr H)$-invariant. \qed
\end{proof}

From this we see that, apart from the canonical partition, there are just two
types of non-trivial $(G\wr H)$-invariant partitions $\Sigma$:
\begin{itemize}
\item If $\Sigma\preceq\Pi$, then we take a $G$-invariant partition $\Sigma_0$
of $\Gamma$, and copy it to all parts of the canonical partition using the top
group. Since $\Sigma$ is invariant under the top group, the partitions of the
parts of $\Pi$ must correspond in this way.
\item If $\Pi\preceq\Sigma$, then we take an $H$-invariant partition $\Sigma^0$
of $\Delta$, and replace each part $A$ by the union of the sets $\Gamma_\delta$
for $\delta\in A$. For the sets of indices $\delta$ for which $\Gamma_\delta$
is contained in each part of the partition must form an $H$-invariant partition
of $\Delta$.
\end{itemize}

From this, we can prove our main result about the imprimitive action of the
wreath product:

\begin{thm}
Let $G, H$ be transitive groups on $\Gamma$ and $\Delta$ respectively. Then
the wreath product $G\wr H$ in its imprimitive action on $\Gamma\times\Delta$
is pre-primitive if and only if both $G$ and $H$ are pre-primitive.
\label{t:wreath}
\end{thm}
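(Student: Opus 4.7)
The plan is to exploit the classification of non-trivial $(G\wr H)$-invariant partitions given immediately before the statement: apart from the canonical partition $\Pi$ itself, every such $\Sigma$ is either a refinement $\Sigma\preceq\Pi$ obtained by copying a single $G$-invariant partition $\Sigma_0$ of $\Gamma$ to each block $\Gamma_\delta$, or a coarsening $\Pi\preceq\Sigma$ obtained from an $H$-invariant partition $\Sigma^0$ of $\Delta$ by amalgamating the $\Gamma_\delta$ over each part. Both directions of the biconditional will follow by pushing invariant partitions of $\Gamma$ and $\Delta$ up to $\Omega$ and by pulling $(G\wr H)$-invariant partitions back down.

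For the backward direction, assume both $G$ and $H$ are pre-primitive. First, $\Pi$ itself is the orbit partition of the base group. For a $\Sigma$ of the first type, pre-primitivity of $G$ supplies a subgroup $G^\ast\leq G$ whose orbit partition on $\Gamma$ is $\Sigma_0$, and then the subgroup of the base group acting as $G^\ast$ in every coordinate has orbit partition $\Sigma$ on $\Omega$. For a $\Sigma$ of the second type, pre-primitivity of $H$ gives $H^\ast\leq H$ with orbit partition $\Sigma^0$; the preimage of $H^\ast$ in $G\wr H$ under the quotient $G\wr H\to H$ whose kernel is the base group $B$ has orbit partition $\Sigma$, since $B$ is already transitive on each $\Gamma_\delta$.

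For the forward direction, assume $G\wr H$ is pre-primitive. To show $G$ is pre-primitive, I would start with a $G$-invariant $\Sigma_0$ of $\Gamma$, lift it to a $(G\wr H)$-invariant $\Sigma$ of the first type, and write $\Sigma$ as the orbit partition of some $K\leq G\wr H$. Fix $\delta\in\Delta$ and let $K_\delta$ be the setwise stabiliser of $\Gamma_\delta$ in $K$; the image of $K_\delta$ under the projection of the base group onto its $\delta$-th coordinate factor should then be a subgroup of $G$ whose orbit partition is $\Sigma_0$. To show $H$ is pre-primitive, I would take an $H$-invariant $\Sigma^0$ of $\Delta$, lift it to $\Sigma$ of the second type written as the orbit partition of $K$, and project $K$ to the top group to obtain $H^\ast\leq H$; the key point is that two indices $\delta,\delta'$ lie in the same part of $\Sigma^0$ iff $\Gamma_\delta$ and $\Gamma_{\delta'}$ lie in the same $K$-orbit of parts of $\Pi$, iff some element of $H^\ast$ maps $\delta$ to $\delta'$.

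The one step needing care is the coincidence of $K$-orbits and $K_\delta$-orbits inside $\Gamma_\delta$ used in the forward direction for $G$: one must rule out the possibility that a $K$-orbit inside $\Gamma_\delta$ is realised only through elements of $K$ that leave $\Gamma_\delta$ before returning. This is handled by the observation that any $k\in K$ sending one point of $\Gamma_\delta$ to another must permute the parts of $\Pi$ (since $K$ preserves $\Sigma\preceq\Pi$) and must therefore fix $\Gamma_\delta$ setwise; once this is granted, both directions are direct consequences of the classification of invariant partitions, and I expect no further subtleties.
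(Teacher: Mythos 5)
Your proof is correct and follows essentially the same route as the paper's: both directions rest on the classification of $(G\wr H)$-invariant partitions into refinements and coarsenings of the canonical partition, with invariant partitions of $\Gamma$ and $\Delta$ lifted to $\Omega$ and orbit subgroups projected back down. The only cosmetic imprecision is that $K_\delta$ need not lie in the base group, so the map you want is the restriction of the action of the setwise stabiliser of $\Gamma_\delta$ to $\Gamma_\delta$ (which lands in $G$ and agrees with the coordinate projection on base-group elements); with that reading, your handling of the $K$-orbit versus $K_\delta$-orbit issue is exactly right.
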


\begin{proof}
Suppose that $G$ and $H$ are pre-primitive, and let $\Sigma$ be a
$G\wr H$-invariant partition of $\Gamma\times\Delta$, different from the
canonical partition $\Pi$.
\begin{itemize}
\item If $\Sigma\preceq\Pi$, then $\Sigma$ is obtained by copying a
$G$-invariant partition $\Sigma_0$ of $\Gamma$ onto each of the parts
$\Gamma_\delta$. By assumption. $\Sigma_0$ is the orbit partition of a subgroup
$N$ of $G$. Then clearly $\Sigma$ is the orbit partition of $N^m$, where
$m=|\Delta|$, since the $\delta$ coordinate of the direct product acts on
$\Gamma_\delta$ with orbit partition $\Sigma_\delta$.
\item If $\Pi\preceq\Sigma$, then $\Sigma$ is obtained by taking the unions
of the sets $\Gamma_\delta$ corresponding the the points $\delta$ in each
part of a $H$-invariant partition $\Sigma^0$ of $\Delta$. By assumption, the
parts of $\Sigma^0$ are the orbits of a subgroup $N$ of $H$. Then clearly the
parts of $\Sigma$ are the orbits of the subgroup $G^m.N=G\wr N$ of $G\wr H$.
\end{itemize}

Now suppose conversely that $G\wr H$ is pre-primitive.
\begin{itemize}
\item Let $\Sigma_0$ be a $G$-invariant partition of $\Gamma$. Then the
partition $\Sigma_0$ obtained by copying $\Sigma$ onto each part of the
canonical partition $\Pi$ is $G\wr H$-invariant, and so is the orbit partition
of a subgroup of $G\wr H$. The group of all elements fixing this partition
induces $H$ on the parts of the canonical partition, and has the form
$N\wr H$, where $N$ is the stabiliser of all the parts of $\Sigma_0$. So
$\Sigma_0$ is the orbit partition of the subgroup $N$ of $G$. Thus, $G$ is
pre-primitive.
\item Let $\Sigma^0$ be a $H$-invariant partition of $\Delta$. The partition
$\Sigma$ whose parts are unions of parts of $\Pi$ indexed by elements of a part
of $\Sigma^0$ is $(G\wr H)$-invariant, and so is the orbit partition of a
subgroup of $G\wr H$. Clearly this subgroup has the form $G\wr N$, where $N$
is a subgroup of $H$ whose orbit partition is $\Sigma^0$. So $H$ is
pre-primitive. \qed
\end{itemize}
\end{proof}

Here is the promised result for direct products:

\begin{thm}
Let $G\leq \Sym(\Gamma)$ and $H\leq \Sym(\Delta)$ be transitive groups. If the
direct product $G\times H$ in its product action is pre-primitive then both $G$ and $H$ are pre-primitive.
\end{thm}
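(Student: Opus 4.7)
The plan is to prove the contrapositive directly via a natural construction: given a $G$-invariant partition of $\Gamma$, we lift it to a $(G\times H)$-invariant partition of $\Gamma\times\Delta$ in a way that, when we appeal to pre-primitivity of $G\times H$, allows us to project a witnessing subgroup back down to $G$. The argument for $H$ will then be symmetric, so I focus on $G$.

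Let $\Sigma_0$ be any $G$-invariant partition of $\Gamma$, and define the partition $\Sigma$ of $\Gamma\times\Delta$ whose parts are the sets $A\times\Delta$ for $A\in\Sigma_0$. Because $(A\times\Delta)(g,h)=Ag\times\Delta$ and $\Sigma_0$ is $G$-invariant, $\Sigma$ is $(G\times H)$-invariant. By pre-primitivity of $G\times H$, there is a subgroup $K\le G\times H$ (which we may take normal by Proposition~\ref{p:normal}) whose orbit partition equals $\Sigma$.

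Let $K_G\le G$ be the projection of $K$ onto the first factor. I claim the orbits of $K_G$ on $\Gamma$ are precisely the parts of $\Sigma_0$. On one hand, if $g\in K_G$, then $(g,h)\in K$ for some $h\in H$, so for $\gamma\in A\in\Sigma_0$ the element $(\gamma,\delta)$ is sent to $(\gamma g,\delta h)$ which lies in the $K$-orbit $A\times\Delta$; hence $\gamma g\in A$, so $K_G$ preserves each part of $\Sigma_0$. On the other hand, fix a part $A$ and points $\gamma_1,\gamma_2\in A$ and any $\delta\in\Delta$; since $(\gamma_1,\delta)$ and $(\gamma_2,\delta)$ lie in the same part $A\times\Delta$ of $\Sigma$, there exists $(g,h)\in K$ with $(\gamma_1 g,\delta h)=(\gamma_2,\delta)$, and then $g\in K_G$ sends $\gamma_1$ to $\gamma_2$. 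Thus $K_G$ is transitive on each $A$, so its orbit partition on $\Gamma$ is exactly $\Sigma_0$, which shows $G$ is pre-primitive. Exchanging the roles of $\Gamma$ and $\Delta$ gives the statement for $H$.

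There is no real obstacle: the only thing that could have gone wrong is the transitivity of $K_G$ on the parts of $\Sigma_0$, but this is forced by the fact that $K$ acts transitively on each rectangular part $A\times\Delta$ and by holding the second coordinate fixed. Note that this proof complements Theorem~\ref{t:wreath}, since by Lemma~\ref{l:dpinwp} the product action of $G\times H$ embeds into the imprimitive wreath product; however, we cannot deduce the present theorem from Theorem~\ref{t:wreath} directly, because pre-primitivity is only known to be upward-closed (Theorem~\ref{lemma-PP}), not downward-closed, so the lifting argument via an explicit partition is essential.
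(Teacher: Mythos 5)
Your main argument is correct, and it is a genuinely different route from the paper's. You lift a $G$-invariant partition $\Sigma_0$ of $\Gamma$ to the partition of $\Gamma\times\Delta$ with parts $A\times\Delta$, invoke pre-primitivity of $G\times H$ to realise it as the orbit partition of a subgroup $K$, and check that the first-coordinate projection $K_G$ has orbit partition exactly $\Sigma_0$; both inclusions (that $K_G$ preserves each part, and that it is transitive on each part by holding the second coordinate fixed) are verified correctly. This is a short, self-contained, elementary argument. The paper instead observes that $G\times H$ in its product action is a subgroup of $G\wr H$ in its imprimitive action (Lemma~\ref{l:dpinwp}), applies upward-closure of pre-primitivity (Theorem~\ref{lemma-PP}) to conclude that $G\wr H$ is pre-primitive, and then reads off pre-primitivity of $G$ and $H$ from the ``only if'' direction of Theorem~\ref{t:wreath}. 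The paper's route is shorter given the machinery already developed; yours has the advantage of not depending on the wreath-product analysis at all.

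One point in your closing remark is wrong, however: you claim the theorem \emph{cannot} be deduced from Theorem~\ref{t:wreath} because pre-primitivity is only upward-closed. In fact that deduction is exactly the paper's proof, and it needs only upward-closure, applied in the correct direction: $G\times H$ is the \emph{subgroup} of $G\wr H$, so pre-primitivity passes up from $G\times H$ to $G\wr H$, and then Theorem~\ref{t:wreath} (an equivalence, not merely an implication) hands back pre-primitivity of $G$ and $H$. No downward closure is required anywhere. Your explicit lifting argument is therefore a valid alternative, but it is not ``essential'' in the sense you assert.
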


\begin{proof}
Suppose that $G\times H$ is pre-primitive. Since the direct product is
embedded in the wreath product in its imprimitive action (Lemma~\ref{l:dpinwp}),
we have that $G\wr H$ in its imprimitive action is pre-primitive by
Proposition~\ref{lemma-PP}; so $G$ and $H$ are pre-primitive, by
Theorem~\ref{t:wreath}. \qed
\end{proof}

\subsection{Wreath products, product action}

This is the same group (up to isomorphism) but a different action. As before
let $G$ act on $\Gamma$ and $H$ on $\Delta$. Then $G\wr H$ acts on the
Cartesian product of $|\Delta|$ copies of $\Gamma$, which we regard as the
set of words of length $N=|\Delta|$ over the alphabet $\Gamma$. The factor
$G_\delta$ of the base group acts on the symbols in position $\delta$, fixing
the symbols in the other positions; the top group acts by permuting the
coordinates.

In this section we examine the product action of the wreath product of two
permutation groups $G$ and $H$. First some preliminary remarks.

To begin, $G\wr H$ is transitive if and only if $G$ is transitive, independent
of $H$ (since then the base group $G^n$ is transitive in the product action).
It is known that $G\wr H$ is primitive if and only if $G$ is primitive but
not cyclic of prime order and $H$ is transitive; a similar result holds for
quasiprimitivity~\cite[Theorem 5.8]{ps}. Apart from the first example below,
we assume that $H$ is transitive in this section.

\paragraph{Remark} If $G$ is transitive and abelian, then $G\wr H$ is
pre-primitive, since the base group is transitive and abelian.

\paragraph{Remark} The quaternion group $Q$ is pre-primitive, but
$G=Q\wr C_2$ is not. For $G$ has a regular normal subgroup $Q^2$; the
stabiliser $G_\alpha$ interchanges the two factors. So, if $a$ and $b$
generate $Q$, then the subgroup generated by $(a,a)$ is $G_\alpha$-invariant,
but is not normal, since $b^{-1}ab=a^{-1}$, and so $(1,b)$ conjugates $(a,a)$
to $(a,a^{-1})$, which is not in the subgroup generated by $(a,a)$.
By Theorem~\ref{t:rns}, $G$ is not pre-primitive.

\medskip

However, with an extra assumption on $G$, we do get pre-primitivity of
$G\wr H$. Let $n$ be the degree of $H$, and $\Gamma$ the set on which $G$
acts. Given a partition $\Sigma$ of $\Gamma$, we denote by $\Sigma^n$ the
partition of $\Gamma^n$ in which $(a_1,\ldots,a_n)$ and $(b_1,\ldots,b_n)$ 
belong to the same part if and only if $a_i$ and $b_i$ belong to the same
part of $\Sigma$ for $i=1,\ldots,n$.

\begin{thm}
Let $G$ and $H$ be transitive permutation groups on $\Gamma$ and $\Delta$
respectively. Assume that $G$ is pre-primitive and has the property that the
stabiliser of a point fixes no additional points (equivalently, this stabiliser
is equal to its normaliser in $G$). Then $G\wr H$, in the product
action, is pre-primitive.
\end{thm}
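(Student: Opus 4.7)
The plan is to reduce the problem to the pre-primitivity of $G$: I will argue that every $(G\wr H)$-invariant partition $\Pi$ of $\Gamma^n$, with $n=|\Delta|$, has the form $\Sigma^n$ for some $G$-invariant partition $\Sigma$ of $\Gamma$. Granted this, pre-primitivity of $G$ supplies a normal subgroup $N\trianglelefteq G$ whose orbit partition is $\Sigma$; then $N^n\le G^n\le G\wr H$ is normal in $G\wr H$ (since $H$ permutes coordinates while $N$ sits in each of them) and has orbit partition $\Sigma^n=\Pi$, as required.

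To define $\Sigma$, fix $\gamma\in\Gamma$ and declare
\[
a\sim_\Sigma b \iff (a,\gamma,\ldots,\gamma)\sim_\Pi(b,\gamma,\ldots,\gamma).
\]
Using $G^n$-invariance of $\Pi$ and transitivity of $G$ one checks that this definition is independent of the choice of filler $\gamma$; $G$-invariance of $\Sigma$ follows by acting by a base-group element supported on the first coordinate; and transitivity of $H$ on coordinates ensures the same relation arises from any other ``distinguished'' coordinate. The inclusion $\Sigma^n\preceq \Pi$ is the easy direction: if $a_i\sim_\Sigma b_i$ for each $i$, one changes coordinates one at a time, using base-group elements of the form $(1,\ldots,x_j,\ldots,1)$ that translate only the filler coordinates, building a chain from $(a_1,\ldots,a_n)$ to $(b_1,\ldots,b_n)$ within a single part of $\Pi$.

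The reverse inclusion $\Pi\preceq\Sigma^n$ is where the hypothesis on $G_\gamma$ is essential, and it is cleanest at the group-theoretic level. Let $K\le G\wr H$ be the setwise stabiliser of the part of $\Pi$ containing $\alpha=(\gamma,\ldots,\gamma)$, let $L=K\cap G^n$, and let $M=\pi_1(L)\le G$; then $L\supseteq G_\gamma^n$, $L$ is $H$-invariant, and by $H$-transitivity all projections $\pi_i(L)$ equal $M$. My goal is to show $L=M^n$. The Goursat-type subgroup $K_0=\{m\in M:(m,1,\ldots,1)\in L\}$ is normal in $M$ and contains $G_\gamma$, and the plan is to prove $K_0=M$: otherwise $G_\gamma\le K_0\trianglelefteq M$ would give, via the various intermediate subgroups and pre-primitivity of $G$, an intermediate group in which $G_\gamma$ becomes normal, contradicting $N_G(G_\gamma)=G_\gamma$. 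Once $L=M^n$ is established, pre-primitivity of $G$ yields $M=NG_\gamma$ with $N\trianglelefteq G$, and $N^n\trianglelefteq G\wr H$ has orbit partition $\Pi$.

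The main obstacle is this identification $K_0=M$ (equivalently, $L=M^n$). The remark preceding the theorem gives a vivid illustration of what goes wrong without the hypothesis: inside $Q\wr C_2$, the subgroup $\langle(a,a)\rangle$ is a $G_\alpha$-invariant but non-normal twisted diagonal in the base group, obstructing pre-primitivity. The self-normalising condition on $G_\gamma$ is precisely what rules out such twisted diagonals, but converting this point-set condition into the required statement about subgroups of $G^n$ will take some care, combining the self-normalising property with pre-primitivity of $G$ applied to the intermediate subgroups sitting between $G_\gamma$ and $G$.
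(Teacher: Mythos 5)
Your overall skeleton is the same as the paper's: show that every $(G\wr H)$-invariant partition $\Pi$ has the form $\Sigma^n$ for a $G$-invariant partition $\Sigma$ of $\Gamma$, then invoke pre-primitivity of $G$ to write $\Sigma$ as the orbit partition of a normal subgroup $N$ and observe that $N^n$ has orbit partition $\Sigma^n$. That reduction, and your ``easy direction'' $\Sigma^n\preceq\Pi$, are fine. But the theorem lives or dies on the other containment, which is exactly where the hypothesis on $G_\gamma$ must be used, and you have not proved it: you set up the Goursat frame ($K$, $L=K\cap G^n$, $M=\pi_1(L)$, $K_0=\{m:(m,1,\ldots,1)\in L\}$), correctly reduce the problem to $K_0=M$, and then say only that ``the plan is to prove $K_0=M$'' and that converting the hypothesis into this statement ``will take some care.'' That is an acknowledged gap at the crux of the argument, so this is not yet a proof.

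Moreover, the route you sketch --- deriving a contradiction with $N_G(G_\gamma)=G_\gamma$ from $G_\gamma\le K_0\lneq M$ via intermediate subgroups and pre-primitivity alone --- does not obviously close. The natural move in your setting (the factor $G_{b}\times 1\times\cdots\times 1$ fixes a point of $P$ with first coordinate $b$, hence stabilises $P$, hence lands in $K_0$) yields only $K_0\supseteq\langle G_\gamma^{\,M}\rangle$, and this normal closure can be a proper subgroup of $M$ even when $G$ is pre-primitive with self-normalising point stabiliser: take $G=S_3\wr C_2$ in its imprimitive action on $6$ points and $M=G$, where $\langle G_\gamma^{\,G}\rangle$ is the base group. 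So some further input is genuinely needed. The paper avoids this by using the hypothesis in its pointwise form and arguing combinatorially: since $G_{a_i}$ fixes no point other than $a_i$, the $i$-th base factor's copy of $G_{a_i}$ stabilises the part $P$ through $(a_1,\ldots,a_n)$ and moves any $b_i\ne a_i$, so every part contains pairs of points differing in exactly one prescribed coordinate; then, because base-group elements supported on different coordinates commute, each part is a full Cartesian product of subsets of $\Gamma$, and the top group's transitivity makes these subsets assemble into a single partition $\Sigma$ with $\Pi=\Sigma^n$. You should either carry out your $K_0=M$ claim in full (being explicit about where $N_G(G_\gamma)=G_\gamma$ enters and why $\langle G_\gamma^{\,M}\rangle$ can be promoted to $M$ in your situation, using the extra constraint that $L$ is normalised by the top group), or switch to the pointwise/Cartesian-product argument.
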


\begin{proof}
Let $\Pi$ be a non-trivial $(G\wr H)$-invariant partition of $\Gamma^n$, where
$n=|\Delta|$. We claim that
\begin{quote}
Let $P$ be a part of $\Pi$, and let $i\in\Delta$. Then $P$ contains two
$n$-tuples which agree in all positions except position $i$.
\end{quote}
To see this, choose $(a_1,\ldots,a_n)\in P$. Since $\Pi$ is non-trivial,
we can choose a different element $(b_1,\ldots,b_n)\in P$; since $H$ is
transitive on the coordinates, we can assume that $b_i\ne a_i$. Now consider
the subgroup of the base group which acts as the identity on all positions
different from the $i$-th and acts as $G_{a_i}$ on the $i$-th position. By
assumption, this subgroup fixes $P$ and contains an element mapping $b_i$
to $b_i'\ne b_i$. Then $(b_1,\ldots,b_i,\ldots,b_n)$ and
$(b_1,\ldots,b_i',\ldots,b_n)$ are the required $n$-tuples.

Now $P$ is a block of imprimitivity for $G\wr H$, and so its setwise
stabiliser acts transitively on it. Suppose that for one (and hence all)
elements of $P$, there are $k$ other elements of $P$ differing from the
chosen one only in the $i$-th position. The transitivity of $H$ shows that this
number is independent of $i$. Also, the mapping taking one such tuple to
another can be chosen to lie in the base group and to stabilise $P$. Since
elements of the base group acting on different coordinates commute, we see
that $P$ is the Cartesian product of $k$-subsets of $\Gamma$, one for each
coordinate. We show that the partition $\Pi$ has the form $\Sigma^n$ for some
partition $\Sigma$ of $\Gamma$.

First we look at the parts of $\Pi$ containing diagonal elements
$(a,a,\ldots,a)$ of $\Omega$. Such a part $P(a)$ is invariant under the action
of the top group $H$, which is transitive on the components; so the subsets
of the different copies of $\Gamma$ making up $P(a)$ are all equivalent under
the natural bijections between these copies. Hence the sets
$P(a)\cap\Gamma_\delta$ for $a\in\Gamma$ form a partition of $\Gamma_\delta$,
and all these parts correspond.

Now take a general element $(a_1,\ldots,a_n)$ of $\Omega$. The unique part $P$
containing it can be mapped to $P(a_1)\cap\Gamma_1$ by an element of the
base group fixing $\Gamma_1$ pointwise, so $P\cap\Gamma_1=P(a_1)\cap\Gamma_1$.
This shows that there is a unique partition $\Sigma_\delta$ of each set
$\Gamma_\delta$ made up of the projections of parts of $\Pi$ onto
$\Gamma_\delta$; and these parts correspond under the natual maps between
different sets $\Gamma_\delta$. This shows that $\Pi=\Sigma^n$, as claimed.

Now by assumption, $G$ is pre-primitive, so $\Sigma$ is the orbit partition
of a normal subgroup $N$ of $G$. But now it follows that $\Pi$ is the orbit
partition of the subgroup $N^n$ of the base group of $G\wr H$. Hence
$G\wr H$ is pre-primitive. \qed
\end{proof}

\paragraph{Remark} If $G$ is primitive, the extra condition on $G$ in this
theorem excludes only the cyclic groups of prime order.

\medskip

In the other direction, things are simpler:

\begin{thm}
If $G\wr H$ in the product action is pre-primitive then $G$ is pre-primitive.
\end{thm}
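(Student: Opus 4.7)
The plan is to take an arbitrary $G$-invariant partition $\Sigma$ of $\Gamma$, lift it to a $(G\wr H)$-invariant partition of $\Gamma^n$ (where $n=|\Delta|$), apply pre-primitivity of $G\wr H$, and extract the required subgroup of $G$ from what comes out. Concretely, form the partition $\Sigma^n$ of $\Gamma^n$ whose parts are the Cartesian products $S_{i_1}\times\cdots\times S_{i_n}$ with each $S_{i_j}$ a part of $\Sigma$. This is clearly invariant under the base group (each coordinate copy of $G$ permutes parts of $\Sigma$) and under the top group (which merely relabels the tuple $(i_1,\ldots,i_n)$), so it is $(G\wr H)$-invariant. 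By pre-primitivity, there is a subgroup $K\le G\wr H$ whose orbit partition is $\Sigma^n$.

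The crux of the argument is to show that $K\le M^n$, where $M=\ker(G\to\Sym(\Sigma))$ is the subgroup of $G$ fixing every part of $\Sigma$ setwise. I assume $\Sigma$ has at least two parts (otherwise $G$ itself trivially witnesses pre-primitivity of $\Sigma$). Write an arbitrary $k\in K$ as $gh$, with $g=(g_1,\ldots,g_n)$ in the base group and $h$ in the top group, and compute the image of a general part $S_{i_1}\times\cdots\times S_{i_n}$ under $gh$. Requiring $gh$ to preserve each ``monochromatic'' part $S\times\cdots\times S$ setwise forces each $g_j$ to stabilise every part of $\Sigma$, so $g_j\in M$; then requiring $gh$ to preserve each part of the form $S\times T\times\cdots\times T$ with $S\ne T$ (and its analogues obtained by moving the distinguished coordinate) forces $h$ to fix every position, whence $h=1$.

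With $K\le M^n$ established, I compare orbits of the diagonal point $(a,\ldots,a)\in\Gamma^n$. Its $K$-orbit is the part of $\Sigma^n$ through it, namely $S^n$ where $S$ is the part of $\Sigma$ containing $a$; its $M^n$-orbit is $(aM)^n$. The inclusion $K\le M^n$ gives $S^n\subseteq(aM)^n$, hence $S\subseteq aM$; and the reverse inclusion $aM\subseteq S$ holds because $M$ stabilises $S$ and $a\in S$. Hence $aM=S$ for every $a\in\Gamma$, so the orbits of $M$ on $\Gamma$ are exactly the parts of $\Sigma$, and $\Sigma$ is the orbit partition of the subgroup $M$ of $G$. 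The main obstacle is the coordinate bookkeeping in the key step eliminating the top-group contribution and forcing $K$ into the base group; once $K\le M^n$ is in hand, the orbit comparison is essentially automatic.
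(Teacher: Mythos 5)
Your proof is correct and follows the same route as the paper: lift $\Sigma$ to the $(G\wr H)$-invariant partition $\Sigma^n$, invoke pre-primitivity of $G\wr H$ to get a witnessing subgroup $K$, and recover from it a subgroup of $G$ whose orbits are the parts of $\Sigma$. The only divergence is in the final extraction step, where the paper simply intersects the normal witness with the base group and projects onto the first coordinate, whereas you prove the sharper containment $K\le M^n$ (forcing the top-group component of every element of $K$ to be trivial when $\Sigma$ is non-trivial) and then identify the $M$-orbits with the parts of $\Sigma$; your version is more detailed and in fact tidier, since it makes no use of the paper's passing remark that the witnessing normal subgroup ``clearly contains the top group'' --- a claim that your own analysis shows cannot hold once $\Sigma$ has at least two parts.
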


\begin{proof} Let $\Sigma$ be a $G$-invariant partition of $\Gamma$. It is
easy to see that $\Sigma^n$ is a $(G\wr H)$-invariant partition of $\Gamma^n$.
By hypothesis, it is the orbit partition of a normal subgroup $N$ of $G\wr H$.
This subgroup clearly contains the top group $H$, and its intersection with
the base group induces on the first coordinate a subgroup of $G$ whose orbit
partition is $\Sigma$. \qed
\end{proof}

\subsection{Diagonal groups}

Diagonal groups arose in the celebrated O'Nan--Scott Theorem. However, they
form a much larger class of transitive groups, discussed in detail in
\cite{bcps}. Let $m$ be a positive integer and $T$ a group. Then the
\emph{diagonal group} $D(T,m)$ is the group of permutations on $\Omega=T^m$
(where we distinguish $m$-tuples in $\Omega$ by putting them in square brackets)
generated by the following elements:
\begin{enumerate}
\item elements of $T^m$, acting by right multiplication;
\item elements of $T$, acting simultaneously on all coordinates by left
multiplication (that is, $t\in T$ maps $[x_1,\ldots,x_m]$ to
$[t^{-1}x_1,\ldots,t^{-1}x_m]$);
\item automorphisms of $T$, acting simultaneously on all coordinates;
\item elements of the symmetric group $S_m$, permuting the coordinates;
\item the map
\[[t_1,t_2,\ldots,t_m]\mapsto[t_1^{-1},t_1^{-1}t_2,\ldots,t_1^{-1}t_m].\]
\end{enumerate}
Note that the permutations of types (d) and (e) generate a group isomorphic
to $S_{m+1}$.

We are going to find a sufficient condition for $D(T,m)$ to be pre-primitive.

Permutations of type (a) constitute a regular subgroup. Types (c), (d) and (e)
generate the stabiliser of a point. Type (b) are not actually necessary, since
any two of left multiplication, right multiplication and conjugation by a
diagonal element generate the third. The regular subgroup $T^m$ is not normal
if $T$ is nonabelian, so the results of Section~\ref{s:rns} do not immediately
apply; but we will see that very similar results hold.

\begin{prop}
Let $G=D(T,m)$.
\begin{enumerate}
\item A $G$-invariant partition is the right coset partition of a subgroup
$H$ of $T^m$ normalised by the elements of types (c), (d) and (e).
\item If any subgroup $H$ of $T^m$ which is normalised by elements of types
(c), (d) and (e) is normal in $T^m$, then $G$ is pre-primitive.
\end{enumerate}
\label{p:diag}
\end{prop}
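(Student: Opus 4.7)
The plan is to mirror the strategy of Section~\ref{s:rns} for groups with a regular normal subgroup, with $T^m$ playing the role of the regular subgroup and $\alpha=[1,1,\ldots,1]$ the base point. The wrinkle is that $T^m$ need not be normal in $G=D(T,m)$, so one must be slightly careful when translating between conjugation in $G$ and the action on $\Omega$; however, the elements of the stabiliser $G_\alpha$ act on $T^m$ as group automorphisms in both senses, which makes everything go through.

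For part~(a), I would let $\Pi$ be a $G$-invariant partition. Since generators of type~(a) give a regular action of $T^m$ by right multiplication, and $\Pi$ is in particular $T^m$-invariant, the analysis at the start of Section~\ref{s:rns} shows that $\Pi$ is the right coset partition of a subgroup $H\le T^m$, namely the part containing $\alpha$. To finish (a) I would check that the generators of types (c), (d) and (e) each fix $\alpha$, while generators of type (b) are redundant, as the paper already notes (left multiplication by $t$ factors as right multiplication by $(t^{-1},\ldots,t^{-1})$ composed with simultaneous inner conjugation by $t$, which sits inside types (a) and (c)). Hence $G_\alpha$ is generated by elements of types (c), (d) and (e), and $G$-invariance of $\Pi$ on top of $T^m$-invariance is exactly the requirement that each of these generators maps the part $H$ to itself, i.e.\ normalises $H$.

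For part~(b), I would take a $G$-invariant partition $\Pi$; by~(a) it is the right coset partition of some subgroup $H\le T^m$ normalised by the elements of types (c), (d) and (e). The hypothesis additionally guarantees that $H$ is normal in $T^m$. Since $G=\langle T^m,(c),(d),(e)\rangle$, these two normality statements combine to give that $H$ is normal in $G$. Finally, the right coset partition of $H$ in $T^m$ coincides with the orbit partition of $H$ on $\Omega=T^m$ under right multiplication, so $\Pi$ is the orbit partition of a normal subgroup of $G$, proving pre-primitivity.

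The only substantive point is the verification that $G_\alpha=\langle(c),(d),(e)\rangle$ and that for these generators the action on $T^m$ as a set of points coincides with conjugation on $T^m$ as a subgroup of $G$. Both follow from direct computation; for instance, for $\phi\in\Aut(T)$ one checks that $\phi\cdot(h_1,\ldots,h_m)\cdot\phi^{-1}=(\phi(h_1),\ldots,\phi(h_m))$, which matches the action of $\phi$ on $\Omega$, and similar checks handle types (d) and (e). So this is routine bookkeeping rather than a genuine obstacle.
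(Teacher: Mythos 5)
Your overall strategy (identify $\Omega$ with $T^m$, use the regular action of the type (a) elements to see that $\Pi$ is the right coset partition of the part $H$ containing $\alpha$, then track the point stabiliser) is the same as the paper's, and the forward direction of (a) is fine: the generators of types (c), (d), (e) fix $\alpha$, hence fix the part $H$ setwise, which is all that the stated proposition requires. But the ``routine bookkeeping'' you defer to at the end contains a claim that is false and that would fail if you carried out the promised computation. The type (e) map $\epsilon\colon[t_1,\ldots,t_m]\mapsto[t_1^{-1},t_1^{-1}t_2,\ldots,t_1^{-1}t_m]$ is \emph{not} a group automorphism of $T^m$ (it is not multiplicative unless $T$ is abelian), and conjugation by $\epsilon$ inside $G$ does \emph{not} preserve the subgroup $T^m$: one computes that $\epsilon(g_1,\ldots,g_m)\epsilon$ sends $[x_1,\ldots,x_m]$ to $[g_1^{-1}x_1,g_1^{-1}x_2g_2,\ldots,g_1^{-1}x_mg_m]$, a product of a left multiplication (type (b)) and a right multiplication, which lies outside the right-regular copy of $T^m$ whenever $g_1$ is non-central. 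This is precisely why the paper warns that $T^m$ is not normal in $D(T,m)$ for nonabelian $T$ and that the results of Section~\ref{s:rns} ``do not immediately apply''. So ``normalised by the elements of types (c), (d) and (e)'' must be read as ``left invariant, as a subset of $\Omega$, by those permutations'', not as conjugation in $G$; and your asserted equivalence (``is exactly the requirement that each of these generators maps the part $H$ to itself'') hides the genuinely nontrivial half, namely that $\epsilon$-invariance of $H$ forces $\epsilon$ to permute \emph{all} the right cosets $H(g_1,\ldots,g_m)$. The paper proves this by an explicit calculation using invariance of $H$ under diagonal conjugation (a type (c) element) together with $\epsilon$; it does not follow from any automorphism formalism.

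The same confusion infects your part (b): from $H\trianglelefteq T^m$ together with invariance under the permutations of types (c), (d), (e) you cannot conclude that $H$ is normal in $G$, for the reason above ($\epsilon h\epsilon$ need not even lie in $T^m$). Fortunately this claim is not needed: pre-primitivity only asks that $\Pi$ be the orbit partition of \emph{some} subgroup of $G$, and since $H\trianglelefteq T^m$ the right cosets $Hx$ coincide with the left cosets $xH$, i.e.\ with the orbits of $H$ acting by right multiplication --- which is exactly the paper's one-line proof of (b). So your argument can be repaired by deleting the normality-in-$G$ claim and the assertion that the point-stabiliser generators act by conjugation on $T^m$, but as written the ``only substantive point'' you identify is resolved incorrectly, and it is exactly the point where the diagonal case genuinely differs from the regular-normal-subgroup case.
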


\begin{proof}
(a) Let $\Pi$ be a $G$-invariant partition, and let $H$ be the part of $\Pi$
containing the identity. Since $T^m$ is regular, the arguments of 
Section~\ref{s:rns} show that $H$ is a subgroup of $T^m$ and $\Pi$ is its
right coset partition. 

If $\Pi$ is invariant under the point stabiliser, then the same is true for
$H$. In other words, if $(h_1,\ldots,h_m)\in H$, then 
\begin{itemize}
\item $(h_1^\alpha,\ldots,h_m^\alpha)\in H$ for all $\alpha\in\Aut(T)$;
\item $(h_{1\pi},\ldots,h_{m\pi})\in H$ for all $\pi\in S_m$;
\item $(h_1^{-1},h_1^{-1}h_2,\ldots,h_1^{-1}h_m)\in H$.
\end{itemize}

Conversely, suppose that $H$ is invariant under these transformations.
Take any right coset of $H$, say $H(g_1,\ldots,g_m)$.
\begin{itemize}
\item Applying an automorphism $\alpha$ of $T$, we find
\[(h_1g_1,\ldots,h_mg_m)^\alpha=(h_1^\alpha,\ldots h_m^\alpha)(g_1^\alpha,
\ldots,g_m^\alpha)\in H(g_1^\alpha,\ldots,g_m^\alpha)\]
since $H$ is invariant under the coordinatewise action of $\alpha$.
\item Applying a permutation $\pi$ to the subscripts, we find
\[(h_1g_1,\ldots,h_mg_m)^\pi=(h_{1\pi},\ldots,h_{m\pi})
(g_{1\pi},\ldots,g_{m\pi})\in H(g_{1\pi},\ldots,g_{m\pi})\]
since $H$ is invariant under $\pi$ applied to the subscripts;
\item Applying the map $\epsilon$ of type (e), we find
\begin{eqnarray*}
(h_1g_1,\ldots,h_mg_m)\epsilon &=&
(g_1^{-1}h_1^{-1},g_1^{-1}h_1^{-1}h_2g_2,\ldots,g_1^{-1}h_1^{-1}h_mg_m) \\
&=& ((h_1^{-1})^{g_1}g_1^{-1},(h_1^{-1}h_2)^{g_1}g_1^{-1}g_2,\ldots,
(h_1^{-1}h_m)^{g_1}{g_1^{-1}g_m} \\
&\in& H(g_1,\ldots,g_m)\epsilon
\end{eqnarray*}
since $H$ is invariant under both conjugation by $g_1$ and $\epsilon$.
\end{itemize}
So $\Pi$ is invariant under these three types of element.

\medskip

(b) If $H$ is a normal subgroup of $T^m$, then its right coset partition
coincides with its orbit partition. \qed 
\end{proof}

\begin{thm}
Let $T$ be a finite group and $m$ a positive integer. Suppose that the following
property holds:
\begin{quote}
If $K$ is any characteristic subgroup of $T$, and $L$ the subgroup of $K$ 
generated by the $(m+1)$st powers and commutators of elements in $K$, then
every subgroup of $K$ containing $L$ is normal in $T$.
\end{quote}
Then $G=D(T,m)$ is pre-primitive.
\label{t:diag}
\end{thm}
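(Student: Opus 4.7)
The plan is to invoke Proposition~\ref{p:diag}(b), reducing the claim to showing that every subgroup $H\le T^m$ normalised by elements of types (c), (d), and (e) is normal in $T^m$. Fix such an $H$. First I extract a natural characteristic subgroup: by type (d) invariance the projection $\pi_i(H)\le T$ is independent of $i$, and by type (c) invariance it is $\Aut(T)$-invariant, so it is a characteristic subgroup $K\le T$. The ``coordinate vector'' subgroup $K_0:=\{t\in T:(t,1,\ldots,1)\in H\}$ is also $\Aut(T)$-invariant and characteristic, and straightforward multiplication of coordinate vectors (together with the $S_m$ action) gives the inclusions $K_0^m\le H\le K^m$.

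The heart of the proof is to show $L_K:=\langle K^{m+1},[K,K]\rangle\le K_0$. For the $(m+1)$-st powers, start from an arbitrary $(h_1,\ldots,h_m)\in H$ and view it as the element $(1,h_1,\ldots,h_m)$ of the associated subgroup $\tilde H\le T^{m+1}$ on which $S_{m+1}$ acts by coordinate permutations (with the transposition $(0,1)$ corresponding precisely to type (e), after normalising by the diagonal $\Delta(T)\le\tilde H$). Applying the transposition $(0,1)$ and multiplying by the original gives the antisymmetric element $(h_1,h_1^{-1},1,\ldots,1)\in\tilde H$. Using the remaining transpositions $(1,i)$ from the $S_m$ action produces $(h_1,1,\ldots,h_1^{-1},\ldots,1)$ with $h_1^{-1}$ in each position $i\ge2$; multiplying these together and then multiplying by the diagonal $(h_1,h_1,\ldots,h_1)\in\Delta$ gives $(h_1^{m+1},1,\ldots,1)\in\tilde H$, i.e.\ $h_1^{m+1}\in K_0$. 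Running the same construction with commutators of two such antisymmetric elements yields an element of $\tilde H$ whose only nontrivial coordinate is a commutator, forcing $[K,K]\le K_0$.

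Once $L_K\le K_0$ is established, the theorem's hypothesis applied to the characteristic subgroup $K$ yields that every subgroup of $K$ between $K_0$ and $K$ is normal in $T$; in particular $K_0\triangleleft T$, so $K_0^m\triangleleft T^m$, and the quotient $A:=K/K_0$ is abelian of exponent dividing $m+1$, on which $T$ acts by automorphisms preserving every subgroup. The final step is to deduce that $\overline H:=H/K_0^m\le A^m$ is $T^m$-invariant under conjugation. The subgroup $\overline H$ inherits invariance under $S_m$, diagonal $\Aut(T)$, and the quotient of (e); it contains no nontrivial coordinate vector and (by another application of (e) to diagonals) no nontrivial diagonal. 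Using the abelian structure of $A$, the strong $T$-invariance property furnished by the hypothesis, and these residual symmetries, one checks that an arbitrary coordinate-wise $T$-conjugate of any element of $\overline H$ lies in $\overline H$, which gives $H\triangleleft T^m$ and completes the proof.

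The main obstacle is the last step: the hypothesis does not say $T$ acts trivially on $K/K_0$, only that every subgroup is preserved, so the residual invariances of $\overline H$ have to be combined carefully with the abelian-of-bounded-exponent structure to upgrade coordinate-wise ``every-subgroup-preserved'' to full $T^m$-normality of $\overline H$. The manipulations in the $S_{m+1}$-picture in the middle step are also somewhat delicate, but their pattern (produce antisymmetric two-support elements, symmetrise over positions, collapse to a single coordinate via the diagonal) is the same for both the power and the commutator case.
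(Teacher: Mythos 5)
Your overall route is the same as the paper's: reduce, via Proposition~\ref{p:diag}(b), to showing that every subgroup $H\le T^m$ normalised by the elements of types (c), (d) and (e) is normal in $T^m$; extract the characteristic subgroup $K=\pi_i(H)$; manufacture the elements $(g,g^{-1},1,\ldots,1)$, $(g^{m+1},1,\ldots,1)$ and $([g_1,g_2],1,\ldots,1)$ of $H$; and pass to an abelian quotient of exponent dividing $m+1$. Your bookkeeping with $K_0=\{t\in T:(t,1,\ldots,1)\in H\}$ in place of the paper's $L$ is a harmless variant (your $K_0$ is exactly the preimage of the subgroup $B$ that the paper extracts at the end), and your $S_{m+1}$-picture for producing the single-coordinate powers and commutators is a legitimate repackaging of the paper's direct computation with the type (e) map. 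Up to the point where you know $L\le K_0$, $K_0^m\le H\le K^m$, and $\overline H=H/K_0^m$ is the ``product equals $1$'' subgroup of $A^m$ with $A=K/K_0$, the argument is sound and parallel to the paper's.

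The gap is the final step, which you yourself flag as ``the main obstacle'' and then do not carry out. Asserting that ``one checks'' that every coordinatewise $T$-conjugate of an element of $\overline H$ lies in $\overline H$ is not a proof, and this is exactly where the content of the theorem sits. Concretely: the hypothesis gives that every subgroup of $K$ containing $L$ (hence containing $K_0$) is normal in $T$, so each $t\in T$ preserves every cyclic subgroup of the abelian group $A$ and therefore acts on $A$ as a power map $a\mapsto a^{n(t)}$. But conjugating $(a_1,\ldots,a_m)$ by $(t_1,\ldots,t_m)$ sends the product $a_1\cdots a_m$ to $a_1^{n(t_1)}\cdots a_m^{n(t_m)}$, and if the exponents $n(t_i)$ are allowed to differ this need not be trivial when $a_1\cdots a_m$ is: the element $(a,a^{-1},1,\ldots,1)$ of $\overline H$ is sent to an element with product $a^{n(t_1)-n(t_2)}$. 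So the combination of ``every subgroup of $A$ is $T$-invariant'' with the residual symmetries of $\overline H$ does not, by itself, yield normality of $H$ in $T^m$; you would need to pin down the conjugation action of $T$ on $K/K_0$ much more precisely. The paper closes this step differently, by classifying the subgroups of $(K/L)^m$ containing the ``sum-zero'' subgroup as $\{(a_1,\ldots,a_m):a_1\cdots a_m\in B\}$ and reducing normality of $H$ in $T^m$ to normality in $T$ of the preimage of the single subgroup $B\le K/L$, which the hypothesis supplies; this is the most compressed point of the paper's own argument, and your sketch stops short of any such reduction. Until this step is supplied in detail, the proof is incomplete.
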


\begin{proof}
We examine further the right coset partition of a subgroup $H$ of $T^m$
invariant under the transformations (c), (d) and (e). Let $\pi_i$ be the
projection of $H$ onto the $i$th direct factor. Since $H$ is invariant
under permutations, $\pi_i(H)=K$ (as subgroup of $T$) is independent of the
index $i$; since $H$ is invariant under automorphisms, $K$ is a characteristic
subgroup of $T$. Thus, $H\le K^m$.

Take $(g_1, g_2, \ldots, g_m)\in H$. Then
$(g_1^{-1}, g_1^{-1}g_2, \ldots, g_1^{-1}g_m)\in H$. 
It follows by closure that $(g_1^{-2}, g_1^{-1}, \ldots, g_1^{-1})\in H$,
whence $(g_1^2,g_1,\ldots,g_1)\in H$. The same is true with the first and
second coordinates swapped; so $(g_1,g_1^{-1},1,\ldots,1)\in H$.

From this we make two deductions:
\begin{itemize}
\item For any $g_1,g_2\in H$, the elements $(g_1,g_1^{-1},1,\ldots,1)$,
$(g_2,g_2^{-1},1,\ldots,1)$ and $(g_1g_2,(g_1g_2)^{-1},1,\ldots,1)$ all
belong to $H$. Multiplying the first two by the inverse of the third and
swapping the first two coordinates, we find that $([g_1,g_2],1,\ldots,1)\in H$.
\item We know $(g_1^2,g_1,\ldots,g_1)\in H$. Multiplying successively by the
elements with $g_1$ in the first coordinate and $g_1^{-1}$ in the $i$th, for
$i=2,\ldots,m$, we get $(g_1^{m+1},1,\ldots,1)\in H$.
\end{itemize}

Since this holds with any coordinate replacing the first, we see that
$L^m\le H$, where $L$ is the subgroup of $K$ generated by $(m+1)$st powers
and commutators. Thus $H/L^m$ is contained in $(K/L)^m$. Note that $K/L$
is an abelian group $A$ of exponent dividing $m+1$.

Let $M$ be the subgroup of $A^m$ consisting of $m$-tuples for which the
product of the coordinates is $1$. Then $M$ is generated by elements having
one coordinate $a\in K/L$, one coordinate $a^{-1}$, and the remaining
coordinates $1$. Since $(g,g^{-1},1,\ldots,1)\in H$ for
all $g\in K$, we see that $H$ contains the inverse image of $M$ in $K^m$.

Now any subgroup of $(K/L)^m$ containing $A$ must have the form
\[\{(a_1,\ldots,a_m):a_1\cdots a_m\in B\}\]
where $B$ is a subgroup of $A$; and $H$ is normal in $T^m$ if and only if
the inverse image of $B$ in $T$ is normal in $T$. So, with our hypothesis,
$H$ is normal in $T^m$, and $G$ is pre-primitive, by
Proposition~\ref{p:diag}(b). \qed 
\end{proof}

There are several simpler conditions which guarantee that the hypotheses of
this theorem are satisfied.

\begin{cor}
Suppose that one of the following holds:
\begin{enumerate}
\item $|T|$ is coprime to $m+1$;
\item $T$ is supersoluble;
\item $T$ is a direct product of non-abelian simple groups.
\end{enumerate}
Then $D(T,m)$ is pre-primitive.
\end{cor}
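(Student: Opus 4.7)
The strategy is to verify, in each of the three cases, the hypothesis of Theorem~\ref{t:diag}: for every characteristic subgroup $K$ of $T$, writing $L$ for the subgroup of $K$ generated by the $(m+1)$st powers and commutators of elements of $K$, every subgroup $H$ of $K$ satisfying $L \le H \le K$ is normal in $T$. In all three cases my argument begins with two preliminary observations: $L$ is itself characteristic in $T$ (since any automorphism of $T$ preserves $K$ and permutes the specified generators of $L$), and the quotient $K/L$ is abelian of exponent dividing $m+1$.

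Cases (a) and (c) are both instances of the collapse $L = K$, after which the only subgroup of $K$ containing $L$ is $K$ itself, which is characteristic in $T$ and therefore normal. In (a), when $\gcd(|T|, m+1) = 1$, the map $x \mapsto x^{m+1}$ restricts to a bijection on every cyclic subgroup of $T$, since $m+1$ is invertible modulo the order of each element; hence every element of $K$ is already an $(m+1)$st power in $K$, giving $L = K$. In (c), the characteristic subgroups of $T = T_1 \times \cdots \times T_r$ are the products indexed by $\Aut(T)$-orbits on $\{T_1, \ldots, T_r\}$, so any such $K$ is itself a direct product of non-abelian simple groups and is in particular perfect; hence $L \supseteq [K,K] = K$.

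For case (b), when $T$ is supersoluble, the quotient $K/L$ can be nontrivial and this is where the real work lies. The task reduces to showing that every subgroup of the abelian section $K/L$ is $T$-invariant under conjugation. My plan is to exploit that in a supersoluble group every chief factor is cyclic of prime order, so that there is a $T$-invariant refinement of any normal abelian section into one-dimensional pieces on which $T$ acts through $\Aut(C_p) \cong C_{p-1}$; I would then combine this with the exponent restriction on $K/L$ coming from $m+1$ to force $T$-invariance of every subgroup. I expect this to be the hardest step, requiring a careful analysis of the $T$-module structure on $K/L$; in particular the delicate point is to rule out $T$ permuting distinct one-dimensional summands within a primary component of $K/L$, which is the feature of supersoluble (rather than merely soluble) groups that has to carry the argument.
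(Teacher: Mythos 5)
Your treatment of cases (a) and (c) is correct and essentially the same as the paper's: in both cases $K/L$ is an abelian quotient of $K$ of exponent dividing $m+1$, which is forced to be trivial either because $m+1$ is coprime to the order of every element of $K$ (case (a)) or because $K$ is perfect (case (c)); hence $L=K$ and the only subgroup to check is $K$ itself, which is characteristic and so normal. The genuine gap is case (b), where you have supplied a plan rather than a proof. Worse, the plan cannot be completed, because the assertion you are trying to verify --- that for supersoluble $T$ every subgroup of $K$ containing $L$ is normal in $T$ --- is false. Take $T=S_3\times C_3=\langle a,b,t\mid a^3=b^3=t^2=1,\ ab=ba,\ a^t=a^{-1},\ b^t=b\rangle$ and $m=2$. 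Then $T$ is supersoluble, $K=\langle a,b\rangle\cong C_3\times C_3$ is characteristic (it is the normal Sylow $3$-subgroup), and $L$, generated by cubes and commutators of elements of $K$, is trivial; yet $\langle ab\rangle$ contains $L$ and is not normal in $T$, since $(ab)^t=a^{-1}b\notin\langle ab\rangle$. This is precisely the ``delicate point'' you flagged --- $T$ mixing the cyclic constituents of $K/L$ --- and it is a genuine obstruction, not merely a technical hurdle: supersolubility controls the chief factors but says nothing about the ``diagonal'' subgroups lying between the terms of a chief series.

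You should also be aware that the paper's own proof of (b) is a short chief-factor argument: it claims that if $\langle u\rangle$ is not normalised by some $t\in T$ then there is a non-cyclic chief factor of $T$ containing $u$ and $u^t$. The example above shows that this inference does not hold (all chief factors of $S_3\times C_3$ are cyclic of prime order), so you should not model a repair on the paper's treatment of (b) either. In summary: (a) and (c) are complete and agree with the paper; (b) is unproved in your proposal, and the route you sketch (like the paper's) cannot establish the hypothesis of Theorem~\ref{t:diag} for all supersoluble $T$. If the conclusion of the corollary is to be salvaged in case (b), it would have to be by an argument that does not pass through that hypothesis.
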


\begin{proof}
In cases (a) and (c), the subgroup $L$ of $K$ is equal to $K$, since there is
no nontrivial abelian quotient with exponent dividing $m+1$. 

Suppose that $T$ is supersoluble, $K$ is characteristic in $T$, $L$ the
subgroup of $K$ generated by $(m+1)$st powers and commutators, and $M$ a
subgroup of $K$ containing $L$ which is not normal in $T$. Then there is
an element $t\in T$ which does not normalise $M$, and so an element
$u\in M/L$ such that $\langle u\rangle$ is not normalised by $t$. But this
means that there is a chief factor of $T$ in $M/L$ containing $u$ and $u^t$
which is not cyclic, contradicting the fact that $T$ is supersoluble. \qed 
\end{proof}

It may be that the converse of Theorem~\ref{t:diag} is true; we have not been
able to decide this.

\paragraph{Example} It can be verified by using GAP~\cite{GAP} that
$G=D(A_4,3)$ is not pre-primitive. In this case, with $K=V_4$, $L=1$, we
see that there are subgroups of $K$ which are not normal in $T=A_4$.

\subsection{Groups with pre-primitive subgroups}

According to \emph{Jordan's theorem}, a primitive group containing a transitive
subgroup on a subset $\Delta$ of $\Omega$, fixing the points outside $\Delta$,
is $2$-transitive. We will prove a somewhat similar theorem for pre-primitivity.

\begin{thm}
Suppose that $G$ is a transitive permutation group on $\Omega$. Suppose that
$\Delta$ is a subset of $\Omega$ satisfying $|\Delta|>|\Omega|/2$ and $H$ a
subgroup of $G$ which fixes every point outside $\Delta$ and acts 
pre-primitively on $\Delta$. Then $G$ is pre-primitive.
\end{thm}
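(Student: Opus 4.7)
The plan is to produce, for any $G$-invariant partition $\Pi$ of $\Omega$, a single normal subgroup of $G$ whose orbits are exactly the parts of $\Pi$. The natural candidate is the kernel $N$ of the action of $G$ on $\Pi$, i.e.\ the normal subgroup of all $g\in G$ fixing every part of $\Pi$ setwise. The orbits of $N$ automatically refine $\Pi$, so the whole task reduces to showing that $N$ is transitive on each part.

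First I would restrict $\Pi$ to $\Delta$: because $H$ fixes $\Omega\setminus\Delta$ pointwise, the partition $\Pi|_\Delta$ whose parts are the nonempty intersections $P\cap\Delta$, for $P\in\Pi$, is $H$-invariant. Pre-primitivity of $H$ on $\Delta$ then supplies a subgroup $H^*\le H$ (normal in $H$, by Proposition~\ref{p:normal}) whose orbits on $\Delta$ are exactly the parts of $\Pi|_\Delta$. Extending trivially to $\Omega\setminus\Delta$, this $H^*$ fixes each part of $\Pi$ setwise, so $H^*\le N$, and by normality of $N$ in $G$ every conjugate $(H^*)^g$ also lies in $N$.

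The heart of the argument is a counting step. Since $G$ is transitive, for any $\alpha\in\Omega$ the set $\{g\in G:g\alpha\in\Delta\}$ has size $|\Delta|\,|G|/|\Omega|$; inclusion--exclusion together with the hypothesis $|\Delta|>|\Omega|/2$ then forces, for any two points $\alpha,\beta\in\Omega$, the existence of a single $g\in G$ with $g\alpha,g\beta\in\Delta$ simultaneously. Given $\alpha,\beta$ in the same part $P$ of $\Pi$, such a $g$ places $g\alpha$ and $g\beta$ inside $gP\cap\Delta$, which is a single part of $\Pi|_\Delta$; so some $h\in H^*$ carries $g\alpha$ to $g\beta$, and then $g^{-1}hg\in(H^*)^g\le N$ carries $\alpha$ to $\beta$. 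Notice that this is uniform in $P$, so there is no need to split into cases according to whether $P$ meets $\Delta$ or not: the counting argument already handles both.

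The only real obstacle, and the only point where the cardinality hypothesis enters, is this simultaneous-translation step; everything else is bookkeeping around the definition of $H^*$ and the normality of $N$. Once $N$ is seen to be transitive on each part of $\Pi$, the parts of $\Pi$ are precisely the $N$-orbits, and $G$ is pre-primitive.
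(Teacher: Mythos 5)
Your proof is correct, and it takes a genuinely different route from the paper's at the key step. The paper first establishes a dichotomy: no part of $\Pi$ can meet both $\Delta$ and $\Omega\setminus\Delta$, because such a part would be fixed setwise by $H$ (which fixes its points outside $\Delta$), hence would contain all of $\Delta$ by transitivity of $H$, and then the hypothesis $|\Delta|>|\Omega|/2$ forces $\Pi$ to be trivial. Granting that, the parts inside $\Delta$ form an $H$-invariant partition of $\Delta$, pre-primitivity supplies $K\le H$ with those orbits, and the conjugates of $K$ generate a normal subgroup whose orbit partition is $\Pi$ (each part disjoint from $\Delta$ being translated wholly into $\Delta$ by transitivity plus the dichotomy). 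You instead skip the dichotomy entirely: you restrict $\Pi$ to $\Delta$ as it stands, possibly cutting parts, and recover transitivity of the kernel $N$ on each part via the pigeonhole count $|\{g:g\alpha\in\Delta\}|=|\Delta|\,|G|/|\Omega|>|G|/2$, which places any two points of a part simultaneously inside $\Delta$ after translation. This localizes the use of the hypothesis $|\Delta|>|\Omega|/2$ to one clean counting step, handles the trivial partition and any hypothetical straddling parts uniformly, and avoids the slightly delicate argument that every part can be translated into $\Delta$. What the paper's route buys in exchange is the structural fact that every part of a non-trivial $G$-invariant partition lies inside or outside $\Delta$, which is of some independent interest; both arguments are complete and correct.
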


\begin{proof}
Suppose that $\Pi$ is a $G$-invariant partition. No part of $\Pi$ can intersect
both $\Delta$ and $\Omega\setminus\Delta$ non-trivially, since such a part
would be fixed by $G$ and therefore would contain the whole of $\Delta$ (by
transitivity of $H$). So every part of $\Pi$ is either contained in or
disjoint from $\Delta$.

The parts contained in $\Delta$ form a $H$-invariant partition of $\Delta$,
and so by hypothesis form the orbit partition of a subgroup $K$ of $H$. Now
every conjugate of $K$ in $G$ fixes all parts of $\Pi$, and so they generate
a group whose orbit partition is $\Pi$. 

Since this holds for every $G$-invariant partition, $G$ is pre-primitive. \qed 
\end{proof}

\section{Data on small transitive groups}

We remarked earlier that pre-primitivity and quasiprimitivity are logically
independent. We might ask whether they are statistically independent, in the
sense that if we pick an isomorphism type of transitive permutation group of
degree $n$ at random, the events that it is pre-primitive and that it is
quasiprimitive are uncorrelated. If $T(n)$, $P(n)$, $QP(n)$ and $PP(n)$
denote the numbers of transitive, primitive, quasiprimitive and pre-primitive
groups of degree $n$ up to permutation isomorphism, this is equivalent to
asking whether $T(n)P(n)=QP(n)PP(n)$. This equation is true in some cases (for
example, if $n$ is prime, then every transitive group of degree $n$ is
primitive, so the four numbers are equal). In general, it seems to be roughly
true. Table~\ref{tpqppp}, computed from the library in GAP~\cite{GAP}, gives
the values of the four functions with $10\le n\le 20$, and the correlation
coefficient of the properties ``pre-primitve'' and ``quasiprimitiive'' when
a transitive group of degree $n$ is chosen uniformly at random.

\begin{table}[htbp]
\[\begin{array}{c|cccc|c}
n & T(n) & P(n) & PP(n) & QP(n) & \hbox{correlation} \\
\hline
10 & 45 & 9 & 42 & 9 & 0.0133 \\
11 & 8 & 8 & 8 & 8 & 0 \\
12 & 301 & 6 & 276 & 7 & 0.0014 \\
13 & 9 & 9 & 9 & 9 & 0 \\
14 & 63 & 4 & 59 & 5 & -0.0108 \\
15 & 104 & 6 & 102 & 8 & -0.0178 \\
16 & 1954 & 22 & 1833 & 22 & 0.0007 \\
17 & 10 & 10 & 10 & 10 & 0 \\
18 & 983 & 4 & 900 & 4 & 0.0003 \\
19 & 8 & 8 & 8 & 8 & 0 \\
20 & 1117 & 4 &  1019 & 10 & -0.0046
\end{array}\]
\caption{\label{tpqppp}Numbers of transitive groups, etc.}
\end{table}

It might be useful to have a bound on the correlation coefficient or some
evidence about its sign, but we have been unable to do this. The data suggest
that most transitive groups are pre-primitive and most quasiprimitive groups
are primitive.

\section{Degrees for which all transitive groups are pre-primitive}

Let
\[\mathcal{S}=\{n\in\mathbb{N}:\hbox{every transitive group of degree $n$
is pre-primitive}\}.\]

\paragraph{Problem} Describe the set $\mathcal{S}$.

\medskip

We give some context and then give upper and lower bounds for this set.

One could ask in a similar way about the set of natural numbers $n$ for which
every pre-primitive group of degree $n$ is primitive. But this is easily
seen to be just the set of prime numbers. For, if $n$ is composite, say $n=ab$,
then $S_a\wr S_b$ in its imprimitive action is pre-primitive.

What about the set of natural numbers $n$ for which the only primitive groups
of degree $n$ are the symmetric and alternating groups? This question has a
longer history: for example Mathieu thought about it. But as one of the first
applications of the Classification of Finite Simple Groups, the authors of
\cite{cnt} showed that this set contains almost all natural numbers. More
precisely, if $E$ is the complementary set (for which non-trivial primitive
groups exist), then they showed that
\[|E\cap\{1,\ldots,n\}|=2\pi(n)+(1+\sqrt{2})n^{1/2}+O(n^{1/2}/\log n),\]
where $\pi(n)$ is the number of primes in $\{1,\ldots,n\}$.

\medskip

Clearly our set $\mathcal{S}$ contains all prime numbers, since a transitive
group of prime degree is primitive. It also contains all squares of primes:

\begin{prop}
A transitive permutation group of degree $p^2$, where $p$ is prime, is
pre-primitive.
\end{prop}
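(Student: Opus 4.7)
The plan is to analyse the possible block sizes and then exploit the kernel of the induced action on the blocks.

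First, observe that for $G$ transitive of degree $p^2$, any non-trivial $G$-invariant partition $\Pi$ must have $p$ parts of size $p$, since the part sizes divide $p^2$ and the partition is neither the singleton partition nor the one-part partition. The trivial partitions are orbit partitions of $1$ and $G$, so it suffices to realise each such $\Pi$ as the orbit partition of a normal subgroup.

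Given such a $\Pi$, I would let $K$ be the kernel of the action of $G$ on the set $\Pi$ of $p$ parts; equivalently, $K=\bigcap_{B\in\Pi}G_B$, the set of permutations fixing every part of $\Pi$ setwise. Then $K\trianglelefteq G$, and every $K$-orbit is contained in some part of $\Pi$, hence has size at most $p$. Because $K$ is normal and $G$ is transitive, all $K$-orbits have the same size $k$; and $k$ divides $p^2$ and is at most $p$, so $k\in\{1,p\}$. If $k=p$, the $K$-orbits are exactly the parts of $\Pi$, and we are done.

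The remaining task is to rule out $k=1$. If $k=1$, then $K$ fixes every point of $\Omega$, so $K=1$ (as $G$ is a faithful permutation group), and $G$ embeds into the symmetric group on $\Pi$, which has order $p!$. But $|G|$ is divisible by $|\Omega|=p^2$, whereas $v_p(p!)=\lfloor p/p\rfloor+\lfloor p/p^2\rfloor+\cdots=1$, so $p^2\nmid p!$, a contradiction. Hence $k=p$, and $\Pi$ is the orbit partition of the normal subgroup $K$, proving that $G$ is pre-primitive. The only subtle point is the equality of $K$-orbit sizes, which follows routinely from normality of $K$ together with transitivity of $G$, so I do not expect a serious obstacle here.
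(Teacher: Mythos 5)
Your proof is correct, and it takes a somewhat different route from the paper's. The paper first reduces to the case where $G$ is a $p$-group, using the facts that a Sylow $p$-subgroup of a transitive group of $p$-power degree is transitive and that pre-primitivity is upward-closed; in that setting the stabiliser $N$ of a block has index $p$, fixes all blocks, and cannot fix a point (a point stabiliser has index $p^2$), so the blocks are its orbits. You instead work with the full group $G$ and the kernel $K$ of its action on the block system, use normality of $K$ to see that all $K$-orbits have a common size $k\in\{1,p\}$, and rule out $k=1$ by the arithmetic fact that $p^2$ divides $|G|$ but not $p!=|\Sym(\Pi)|$. All the steps you flag as routine (equal orbit sizes for a normal subgroup, $k\mid p^2$, $v_p(p!)=1$) are indeed fine. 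Your version is self-contained and avoids invoking the upward-closure theorem and the Sylow reduction; the paper's version localises the whole argument inside a $p$-group, which is in the same spirit but organised differently. Either argument is a complete and valid proof.
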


\begin{proof}
It suffices to prove the result in the case
where the group $G$ is a $p$-group. For the Sylow $p$-subgroup of a
transitive group $G$ of prime power degree is transitive, and if it is
pre-primitive then so is $G$. So suppose that $G$ is a $p$-group.

Let $\Pi$ be any $G$-invariant partition; it consists of $p$ sets of size $p$.
Let $N$ be the subgroup of $G$ fixing a part of $\Pi$. Then $|G:N|=p$ and
$N$ fixes all the blocks. But $N$ cannot fix a point, since a point stabiliser
has index $p^2$. So the blocks are orbits of $N$. Thus $G$ is pre-primitive.
\qed
\end{proof}

In the other direction, let $\mathcal{A}$ be the set of natural numbers $n$
for which every group of order $n$ is abelian. This well-studied set consists
of all numbers which are not divisible by $p^3$ (where $p$ is prime), or by
$pq$ (where $p$ and $q$ are primes with $q\mid p-1$) or by $p^2q$ (where $p$
and $q$ are primes with $q\mid p+1$). (This result is ``folklore'', but we
have been unable to find a good reference.)

\begin{prop}
$\mathcal{S}\subseteq\mathcal{A}$.
\end{prop}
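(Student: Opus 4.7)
The plan is to prove the contrapositive: given $n\notin\mathcal{A}$, I will exhibit a transitive group of degree $n$ that is not pre-primitive, so that $n\notin\mathcal{S}$. By Corollary~\ref{c:regular}, it suffices to produce a non-Dedekind group of order $n$; its regular representation will then be transitive of degree $n$ but not pre-primitive.

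The construction splits on divisibility by $8$. Since $n\notin\mathcal{A}$, there exists a non-abelian group of order $n$. First, if $8\nmid n$, then Dedekind's classification (Theorem~\ref{t:dedekind}) forces every non-abelian Dedekind group to have order divisible by $8$ (since it contains $Q_8$ as a direct factor), so every non-abelian group of order $n$ is automatically non-Dedekind, and we are done. Second, if $8\mid n$, take $G=D_8\times C_{n/8}$, the direct product of the dihedral group of order $8$ with a cyclic group of order $n/8$. This has order $n$, and $D_8$ contains a non-normal subgroup of order $2$ (any reflection), so the corresponding subgroup of $G$ is not normal; hence $G$ is non-Dedekind.

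There is no real obstacle here: once one observes that every non-abelian Dedekind group has order divisible by $8$, the two cases cover all $n\notin\mathcal{A}$, and Corollary~\ref{c:regular} supplies the conclusion.
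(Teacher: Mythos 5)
Your proof is correct and follows essentially the same route as the paper: the paper's argument also reduces to the observation that $\mathcal{A}$ coincides with the set of $n$ for which every group of order $n$ is Dedekind, and then applies Corollary~\ref{c:regular} to the regular action of a non-Dedekind group. You have simply made explicit (via the two cases $8\nmid n$ and $8\mid n$) the step the paper dismisses as ``a little thought''.
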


\begin{proof}
A little thought shows that $\mathcal{A}$ is also the set of natural
numbers $n$ for which every group of order $n$ is Dedekind; and, if $n$ is
not in this set, then a non-Dedekind group acting regularly is not
pre-primitive, so $n\notin\mathcal{S}$. \qed
\end{proof}

Strict inequality holds in both cases:
\begin{itemize}
\item The groups $A_5$ and $S_5$ have transitive imprimitive actions on $15$
points (on the cosets of a Sylow $2$-subgroup). Since $A_5$ is simple, this
action of $A_5$ is quasiprimitive, and so not pre-primitive; the same is true
for~$S_5$. So $15\in\mathcal{A}\setminus\mathcal{S}$.
\item Computation shows that all transitive groups of degrees $33$ and $35$
are pre-primitive. So
$33,35\in\mathcal{S}\setminus(\mathcal{P}\cup\mathcal{P}_2)$, where
$\mathcal{P}$ is the set of primes and $\mathcal{P}_2$ the set of primes 
squared.
\end{itemize}

A special case of the general problem which may be tractable is to find which
products of two distinct primes belong to $\mathcal{S}$. Suppose that $p$ and
$q$ are primes with $q<p$. We have seen that, if $q\mid p-1$, then
$pq\notin\mathcal{S}$. Further examples can be constructed as follows. Let
$p=2^d+1\ge5$ be a Fermat prime, and let $q$ be a prime divisor of $p-2$.
The group $G=\mathrm{PSL}(2,2^d)$ has an imprimitive action on $pq$ points
(the stabiliser being a subgroup of index $q$ in the Sylow $2$-normaliser).
Since this group is simple, the action is quasiprimitive, and so not 
pre-primitive.

\section{Related concepts}

The guiding principle behind the definition of pre-primitivity was to find a
condition logically independent of quasiprimitivity such that its conjunction
with quasiprimitivity is equivalent to primitivity.

We could now consider playing the same game with other pairs of properties
of permutation groups. We give three examples, and invite readers to consider
others. 

\subsection{From transitivity to quasiprimitivity}

We want a property, which we shall call \emph{pre-QP}, which together with
transitivity is equivalent to quasiprimitivity. It is clear that such a 
property can be defined as follows: The permutation group $G$ on $\Omega$
is pre-QP if its action on each of its orbits is quasiprimitive (equivalently,
every normal subgroup of $G$ acts either transitively or trivially on each
$G$-orbit).

This property does not have such a rich theory as pre-primitivity, so we say
no more about it.

\subsection{From $k$-homogeneity to $k$-transitity}

Let $k$ be a positive integer less than $|\Omega|$. A permutation group $G$ on
$\Omega$ is \emph{$k$-homogeneous} if its action on the set of $k$-element
subsets of $\Omega$ is transitive, and is \emph{$k$-transitive} if its action
on the set of ordered $k$-tuples of distinct elements of $\Omega$ is transitive.
Both of these conditions have been intensively studied.

The property that lifts $k$-homogeneity to $k$-transitivity is also known,
having been first given by Neumann~\cite{pmn:generosity}. The permutation group
$G$ is \emph{generously $(k-1)$-transitive} if the setwise stabiliser in $G$
of any $k$-set acts on it as the symmetric group $S_k$. Neumann showed that
this condition implies $(k-1)$-transitivity. In the case $k=2$, it is
equivalent to requiring that all the orbitals of $G$ are self-paired. We have
nothing more to add here.

\subsection{From primitivity to synchronization}

The property of synchronization comes from automata theory by way of
semigroup theory, and is discussed in \cite{acs}. We say that the permutation
group $G$ on $\Omega$ is \emph{synchronizing} if, for any map
$f:\Omega\to\Omega$ which is not a permutation, the monoid
$\langle G,f\rangle$ generated by $G$ and $f$ contains an element of rank~$1$
(that is, one which maps the whole of $\Omega$ onto a single point).

For our purposes, the most useful characterisation of this property is due to
Neumann~\cite{pmn:sync}. We say that a partition $\Pi$ of $\Omega$
is \emph{section-regular} for the permutation group $G$ on $\Omega$ if there
exists a subset $A$ of $\Omega$ such that $Ag$ is a section (transversal) of
$\Pi$ for all $g\in G$. Now Neumann showed that a permutation group is
synchronizing if and only if it has no non-trivial section-regular partition
(where the trivial partitions are as described earlier).

From this it is clear that synchronization implies primitivity: for a
$G$-invariant partition $\Pi$ is section-regular (simply take $A$ to
be any transversal to $\Pi$). The converse is false, as numerous
examples show.

Accordingly, we will say that the transitive permutation group $G$ on $\Omega$
is \emph{pre-synchronizing} if every section-regular partition for $G$ is
$G$-invariant. 

So our interest is in pre-synchronizing groups which are imprimitive. It turns
out that these can be completely classified:

\begin{thm}
Let $G$ be a transitive imprimitive permutation group which is
pre-synchronizing. Then $G$ is isomorphic to the Klein group, in its regular
action of degree~$4$.
\end{thm}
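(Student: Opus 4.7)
The plan is to use the pre-synchronizing hypothesis to manufacture a whole family of $G$-invariant partitions from a single one, and to argue that the resulting rigidity forces $G$ to be the Klein group of order $4$ in its regular action.

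By imprimitivity and transitivity of $G$, I first fix a non-trivial $G$-invariant partition $\Pi$ of $\Omega$ with blocks $B_1,\dots,B_k$ of common size $m$, where $k,m\ge 2$. The crucial observation is that every partition $\Sigma$ of $\Omega$ whose parts are transversals of $\Pi$ is section-regular for $G$, with witness $A=B_1$: indeed, $B_1$ meets each transversal of $\Pi$ in exactly one point (so is a section of $\Sigma$), and for every $g\in G$ the set $B_1g$ is another block $B_j$, also a section of $\Sigma$. Since there are $(m!)^{k-1}$ such partitions, the pre-synchronization hypothesis forces every one of them to be $G$-invariant.

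The heart of the proof is to exploit this rigidity. After fixing identifications of each $B_j$ with $\{1,\dots,m\}$, each transversal partition is parametrised by a tuple $(\phi_2,\dots,\phi_k)$ of bijections $\phi_j\colon B_1\to B_j$. If $g\in G$ induces the permutation $\pi$ on the blocks with restrictions $g_j=g|_{B_j}\colon B_j\to B_{\pi(j)}$, and if $s=\pi^{-1}(1)$, then unravelling the condition that $g$ fixes the transversal partition parametrised by $(\phi_2,\dots,\phi_k)$ yields
\[
\phi_{\pi(j)}^{-1}\, g_j \,\phi_j \;=\; g_s\,\phi_s \qquad\text{for all } j\neq s.
\]
Demanding this hold for every tuple imposes strong constraints. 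If $m\ge 3$, the centre of $\Sym(B_j)$ is trivial, and varying the $\phi_j$ in the equation forces $g_j=1$ for each $j$ as well as $\pi=1$; hence $g=1$ and $G=1$, contradicting transitivity. If $m=2$ and $k\ge 3$, a short case analysis on the cycle structure of $\pi$ shows that whenever $\pi$ is non-trivial one side of the equation varies with some $\phi_j$ while the other is constant, so $\pi=1$ for every $g\in G$, meaning $G$ fixes each block and cannot be transitive. Therefore $m=k=2$ and $|\Omega|=4$.

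In this residual case, $\Pi$ has two blocks of size two and there are only two partitions of $\Omega$ into transversal pairs; together with $\Pi$, these account for the three non-trivial partitions of $\Omega$ into two pairs. Their common stabiliser in $\Sym(\Omega)$ is the Klein four-group, so $G$ is contained in that group, and since its only transitive subgroup is itself, $G$ equals the Klein four-group in its regular action. The main obstacle will be the case analysis for $m=2$, $k\ge 3$: ruling out elements of $G$ that swap two blocks while fixing others requires the equations for the blocks fixed by $\pi$ to force an equality between a constant $g_j$ and an expression $g_s\phi_s$ that varies with $\phi_s$, which is precisely the contradiction needed.
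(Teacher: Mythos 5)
Your proposal is correct, and it opens exactly as the paper does: every partition of $\Omega$ into transversals of $\Pi$ is section-regular (witnessed by a block of $\Pi$), so pre-synchronization forces all $(m!)^{k-1}$ of them to be $G$-invariant. Where you diverge is in how this rigidity is exploited. The paper's route is to invoke the standard fact that the intersection of blocks of two $G$-invariant partitions is again a block: perturbing one transversal in a single coordinate produces two invariant transversals meeting in $k-1$ points, so $k-1$ divides $k$ and $k=2$; interchanging the roles of $\Pi$ and $\Sigma$ then gives $m=2$ in one further line. You instead parametrise the transversal partitions by tuples of bijections $\phi_j\colon B_1\to B_j$ and solve the functional equation $\phi_{\pi(j)}^{-1}g_j\phi_j=g_s\phi_s$ over all tuples; the triviality of $Z(\Sym(B_1))$ kills $m\ge 3$, and a parity computation in $C_2$ kills $m=2$, $k\ge 3$ by forcing every element of $G$ to fix all blocks. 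Your equation and both case analyses check out (for $m\ge3$ the $\pi\ne 1$ cases reduce either to one side varying while the other is constant, or to conjugation acting as inversion on a non-abelian symmetric group), and the endgame for $m=k=2$ via the common stabiliser of the three pair-partitions is the same as the paper's. The paper's divisibility argument is shorter and avoids any case analysis; your computation is longer but entirely self-contained, needs no block-intersection lemma, and yields the stronger intermediate conclusion that outside degree $4$ the group would have to fix every block of $\Pi$. To be fully rigorous you would need to write out the case analysis you describe as ``short,'' but the mechanism you identify for each case is the correct one.
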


\begin{proof}
Let $\Pi$ be a non-trivial $G$-invariant partition, and $A$ a part of $\Pi$.
Then the images of $A$ under $G$ are the parts of $\Pi$. Take $\Sigma$ to be
any partition such that each part of $\Sigma$ is a transversal for $\Pi$. Then
$Ag$ is a transversal for $\Sigma$, for all $g\in G$; in other words, $\Sigma$
is section-regular.

Since $G$ is pre-synchronizing, it follows that $\Sigma$ is $G$-invariant.
Suppose that $k$ is the size of a part $B$ of $\Sigma$, and let $\Sigma'$ be
another  transversal for $P$ satisfying $|B\cap B'|=k-1$. Then $B\cap B'$ is a
block of imprimitivity contained in $B$. So $k-1$ divides $k$, whence $k=2$.

Now let $m$ be the number of parts of $\Sigma$ (the size of $A$). Since
$\Sigma$ is a non-trivial $G$-invariant partition, we can run the same argument
with $\Pi$ and $\Sigma$ interchanged to conclude that also $m=2$, whence $G$
is a permutation group of degree~$4$. Since it preserves at least two distinct
partitions into two sets of size $2$, we conclude that $G$ must be the Klein
group of order~$4$. \qed 
\end{proof}

\section*{Acknowledgements}

The first author was funded by a StARIS research internship from the University
of St Andrews.

No data were used in the preparation of this paper apart from the GAP transitive
groups library.

The authors declare no conflict of interest.

The authors are grateful to a reviewer for detailed and thoughtful comments
which have materially improved the paper.

\end{document}